\def\@tocline#1#2#3#4#5#6#7{\relax
  \ifnum #1>\c@tocdepth 
  \else
    \par \addpenalty\@secpenalty\addvspace{#2}%
    \begingroup \hyphenpenalty\@M
    \@ifempty{#4}{%
      \@tempdima\csname r@tocindent\number#1\endcsname\relax
    }{%
      \@tempdima#4\relax
    }%
    \parindent\z@ \leftskip#3\relax \advance\leftskip\@tempdima\relax
    \rightskip\@pnumwidth plus4em \parfillskip-\@pnumwidth
    #5\leavevmode\hskip-\@tempdima
      \ifcase #1
       \or\or \hskip 1em \or \hskip 2em \else \hskip 3em \fi%
      #6\nobreak\relax
    \dotfill\hbox to\@pnumwidth{\@tocpagenum{#7}}\par
    \nobreak
    \endgroup
  \fi}
 \numberwithin{equation}{section}
\def\bR{{\mathbb{R}}}
\def\bZ{{\mathbb{Z}}}
\def\bN{{\mathbb{N}}}
\def\cD{{\mathscr{D}}}
\def\cH{{\mathscr{H}}}
\def\cM{{\mathscr{M}}}
\def\cP{{\mathscr{P}}}
\def\ve{\varepsilon}
\DeclareMathOperator{\diam}{diam}
\def\dist{\mathop\mathrm{dist}} 						
\def\supp{\mathop\mathrm{supp}}					
\newcommand{\ps}[1]{\left( #1 \right)}
\newcommand{\ck}[1]{\left\{#1 \right\}}
\newcommand{\cnj}[1]{\overline{#1}}
\def\XXint#1#2#3{{\setbox0=\hbox{$#1{#2#3}{\int}$ }
\vcenter{\hbox{$#2#3$ }}\kern-.58\wd0}}
\theoremstyle{plain}
\newtheorem{theorem}{Theorem}
\newtheorem{corollary}[theorem]{Corollary}
\newtheorem{lemma}[theorem]{Lemma}
\newtheorem{proposition}[theorem]{Proposition}
\theoremstyle{definition}
\newtheorem{definition}[theorem]{Definition}
\newtheorem{remark}[theorem]{Remark}
\numberwithin{equation}{section}
\numberwithin{theorem}{section}
\newcommand\eqn[1]{\eqref{e:#1}}
\newcommand\Theorem[1]{Theorem \ref{t:#1}}
\newcommand\Lemma[1]{Lemma \ref{l:#1}}
\newcommand\Remark[1]{Remark \ref{r:#1}}
  \DeclareFontFamily{U}{mathb}{\hyphenchar\font45} 
\DeclareFontShape{U}{mathb}{m}{n}{
      <5> <6> <7> <8> <9> <10> gen * mathb
      <10.95> mathb10 <12> <14.4> <17.28> <20.74> <24.88> mathb12
      }{}
\DeclareSymbolFont{mathb}{U}{mathb}{m}{n}
\DeclareMathSymbol{\toitself}      {3}{mathb}{"FD}  
\begin{document}

\title[A characterization of $1$-rectifiable doubling measures]{A characterization of $1$-rectifiable doubling measures with connected supports}

\author{Jonas Azzam}
\address{Departament de Matem\`atiques\\ Universitat Aut\`onoma de Barcelona \\ Edifici C Facultat de Ci\`encies\\
08193 Bellaterra (Barcelona) }
\email{jazzam "at" mat.uab.cat}
\author{Mihalis Mourgoglou}
\address{Departament de Matem\`atiques, Universitat Aut\`onoma de Barcelona and Centre de Reserca Matem\` atica, Edifici C Facultat de Ci\`encies \\ 08193 Bellaterra (Barcelona)}
\email{mmourgoglou@crm.cat}

\keywords{Doubling measure, rectifiability, porosity, connected metric spaces}
\subjclass[2010]{Primary 28A75, Secondary 28A78,}
\thanks{The authors were supported by the ERC grant 320501 of the European Research Council (FP7/2007-2013).}

\maketitle

\begin{abstract}
Garnett, Killip, and Schul have exhibited a doubling measure $\mu$ with support equal to $\mathbb{R}^{d}$ which is {\it $1$-rectifiable}, meaning there are countably many curves $\Gamma_{i}$ of finite length for which $\mu(\mathbb{R}^{d}\backslash \bigcup \Gamma_{i})=0$. In this note, we characterize when a doubling measure $\mu$ with support equal to a connected metric space $X$ has a $1$-rectifiable subset of positive measure and show this set coincides up to a set of $\mu$-measure zero with the set of $x\in X$ for which $\liminf_{r\rightarrow 0}  \mu(B_{X}(x,r))/r>0$.
\end{abstract}
\tableofcontents

\maketitle

\tableofcontents

\section{Introduction}

Recall that a Borel measure $\mu$ on a metric space $X$ is {\it doubling} if there is $C_{\mu}>0$ so that 
\begin{equation}\label{e:doubling}
\mu(B_{X}(x,2r))\leq C_{\mu}\mu(B_{X}(x,r))\mbox{ for all }x\in X, \;\; r>0.
\end{equation} 
In \cite{GKS}, Garnett, Killip, and Schul exhibit a doubling measure $\mu$ with support equal to $\bR^{n}$, $n>1$, that is $1$-rectifiable in the sense that there are countably many curves $\Gamma_{i}$ of finite length such that $\mu(\bR^{n}\backslash \bigcup \Gamma_{i})=0$. This is surprising given that such measures give zero measure to smooth or bi-Lipschitz curves in $\bR^{d}$. To see this, note that for such a curve $\Gamma$ and for each $x\in \Gamma$, there is $r_{x},\delta_{x}>0$ so that for all $r\in (0,r_{x})$ there is $B_{\bR^{d}}(y_{x,r},\delta_{x} r)\subseteq B_{\bR^{n}}(x,r_{x})\backslash \Gamma$, so by the Lebesgue differentiation theorem, $\mu(\Gamma)=0$. If $\Gamma$ is just Lipschitz and not bi-Lipschitz, however, we only know this property holds for every point in $\Gamma$ outside a set of zero length. The aforementioned result shows that Lipschitz curves of finite length can in some sense be coiled up tightly enough so that this zero length set accumulates on a set of positive doubling measure. 

The notion of rectifiability of a measure that we are using is not universal. In \cite{ADT}, a measure $\mu$ in Euclidean space being $d$-rectifiable means $\mu\ll \cH^{d}$ and $\supp \mu$ is $d$-rectifiable. In our setting, however, we don't require absolute continuity of our measures. To avoid ambiguity, we fix our definition below, which is the convention used by Federer \cite[Section 3.2.14]{Federer}. 

\begin{definition}
If $\mu$ is a Borel measure on a  metric space $X$, $d$ is an integer, and $E\subseteq X$ a Borel set, we say $E$ is {\it $(\mu,d)$-rectifiable} if $\mu(E\backslash \bigcup_{i=1}^{\infty} \Gamma_{i})=0$ where $\Gamma_{i}=f_{i}(E_{i})$, $E_{i}\subseteq \bR^{d}$, and $f_{i}:E_{i}\rightarrow X$ is Lipschitz. We say $\mu$ is {\it $d$-rectifiable} if $\supp \mu$ is $(\mu,d)$-rectifiable. 
\end{definition}

A set $E\subseteq \bR^{n}$ of positive and finite $\cH^{d}$-measure is {\it $d$-rectifiable} if if is $(\cH^{d},d)$-rectifiable (see in \cite[Definition 15.3]{Mattila} and the few paragraphs preceding it). This is also equivalent to being covered up to set of $\cH^{d}$-measure zero by Lipschitz graphs \cite[Lemma 15.4]{Mattila}. The example from \cite{GKS}, however, shows that being almost covered by Lipschitz graphs versus Lipschitz images are not equivalent definitions for rectifiability of a measure.

Since \cite{GKS}, it has been an open question to classify which doubling measures on $\mathbb{R}^{d}$ are rectifiable. Very recently, Badger and Schul have given a complete description. First, for a general Radon measure in $\bR^{d}$ and $A$ compact with $\mu(A)>0$, define
\[\beta_{2}^{(1)}(\mu,A)^{2}=\inf_{L}\int_{A}\ps{\frac{\dist(x,L)}{\diam A}}^{2}\frac{d\mu(x)}{\mu(A)}\]
where the infimum is taken over all lines $L\subseteq \bR^{d}$. 

\begin{theorem} (\cite[Corollary 1.12]{BS2}) If $\mu$ is a Radon measure on $\bR^{d}$ such that $\liminf_{r\rightarrow 0} \beta_{2}^{(1)}(\mu,B_{\bR^{d}}(x,r))>0$ for $\mu$ almost every $x\in \bR^{d}$, then $\mu$ is $1$-rectifiable if and only if 
\begin{equation}
\sum_{x\in Q\atop \ell(Q)\leq 1}\frac{\diam Q}{\mu(Q)}<\infty \;\;\mu\mbox{ a.e.}
\label{e:BS}
\end{equation}
where the sum is over half-open dyadic cubes $Q$.
\label{t:BS}
\end{theorem}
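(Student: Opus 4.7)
The plan is to prove the two directions using quite different tools: the necessity direction via density estimates on Lipschitz images, and the sufficiency direction via a Jones-type traveling salesman construction. Write $Q_k(x)$ for the half-open dyadic cube of side $2^{-k}$ containing $x$, so that $S(x) := \sum_{k\geq 0}\diam Q_k(x)/\mu(Q_k(x))$ and the condition $S(x)<\infty$ asserts a summable upper $1$-density blow-up along dyadic scales at $x$.

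For the sufficiency direction, I would appeal to a Badger--Schul traveling-salesman criterion for Radon measures: if an appropriate $\beta$-weighted dyadic sum is $\mu$-a.e.~finite, then $\mu$ is carried by countably many Lipschitz images. The trivial bound $\beta_{2}^{(1)}(\mu,cQ)\lesssim 1$ combined with the hypothesis $S(x)<\infty$, via a layered Cauchy--Schwarz and a Carleson-type packing argument, controls the weighted BS-sum by $S(x)$ (times scale-summable geometric factors) and thereby yields rectifiability on $\{S<\infty\}$. The $\beta$-lower-bound hypothesis plays no role in this half; its role is instead, in the other direction, to rule out the competing flat-graph regime in which $\mu$ can be rectifiable while $S$ diverges (e.g.~arclength on a smooth curve).

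For the necessity direction, decompose $\supp\mu$ as a countable union of Lipschitz images $\Gamma_i = f_i(E_i)$ and localize at $\mu$-a.e.~$x$ to a single $\Gamma=f(E)$ with $f:E\subseteq \bR\to\bR^d$ Lipschitz. The assumption $\liminf_{r\to 0}\beta_{2}^{(1)}(\mu,B(x,r))\geq \beta_0>0$ forces $\Gamma$ to be non-flat at every small scale near $x$; since $f$ has finite length, this non-flatness can only be sustained by the curve passing through $B(x,r)$ many times. Bounding $\beta_{2}^{(1)}$ from below by $\beta_0$ and from above by the typical deviation of $f^{-1}(B(x,r))$ from a best-fit line, one extracts a lower bound $\mu(B(x,r))\gtrsim r\cdot N(r)$ with $N(r)\to\infty$ fast enough that $S(x)<\infty$ by a Borel--Cantelli argument along the dyadic geometric sequence $r=2^{-k}$.

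The main obstacle is the necessity direction: making rigorous the heuristic that non-flatness forces the curve to wind, and thereby to accumulate measure at a summably-blowing rate. This requires a careful uniformization after localizing to positive-measure Borel subsets on which $\beta_0$ is a fixed quantitative lower bound, the Lipschitz constants of the $f_i$ are uniformly controlled, and an approximate tangent direction for $\Gamma$ is well-defined. The pigeonhole bookkeeping across scales, together with control on how the pre-image $f^{-1}(B(x,r))$ decomposes into sub-intervals, is the technical heart of the Badger--Schul argument.
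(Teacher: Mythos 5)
Note first that the paper does not prove this statement at all: it is quoted as a black box from Badger--Schul \cite[Corollary 1.12]{BS2}, and the paper's own contribution (the Main Theorem) is proved by entirely different means. So your proposal can only be measured against the Badger--Schul route itself. Your sufficiency half is essentially that route and is fine in outline: since $\beta_{2}^{(1)}\lesssim 1$ trivially, finiteness of $\sum_{Q\ni x,\,\ell(Q)\leq 1}\diam Q/\mu(Q)$ forces finiteness of the $\beta^{2}$-weighted dyadic sum in the Badger--Schul sufficient condition, which gives $1$-rectifiability, and you correctly note that the hypothesis $\liminf_{r\to 0}\beta_{2}^{(1)}(\mu,B_{\bR^{d}}(x,r))>0$ is not needed for this half.

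The necessity half, however, contains a genuine gap. The actual argument is short once one invokes the main theorem of \cite{BS1}: if $\mu$ is $1$-rectifiable, then the $\beta^{2}$-weighted analogue of the sum in \eqref{e:BS} (with $\beta_{2}^{(1)}(\mu,3Q)^{2}$ as weights) is finite $\mu$-a.e.; the hypothesis $\liminf_{r\to0}\beta_{2}^{(1)}(\mu,B_{\bR^{d}}(x,r))\geq\beta_{0}>0$ then lets one divide by $\beta_{0}^{2}$ and conclude \eqref{e:BS}. Your proposed substitute --- ``non-flatness forces the curve to wind, hence $\mu(B(x,r))\gtrsim r\,N(r)$ with $N(r)\to\infty$, then Borel--Cantelli'' --- does not deliver the conclusion. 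Condition \eqref{e:BS} is a quantitative summability statement: writing $Q_{k}(x)$ for the dyadic cube of side $2^{-k}$ containing $x$, you need $\mu(Q_{k}(x))\geq 2^{-k}a_{k}$ with $\sum_{k}1/a_{k}<\infty$, and a qualitative statement that some multiplicity $N(r)$ tends to infinity gives no such rate; moreover Borel--Cantelli requires a summable sequence of measures that your sketch never produces, and a lower bound on $\beta_{2}^{(1)}$ of a ball does not by itself bound the number of passes of a fixed finite-length curve through that ball (small $\mu$-mass placed off a line can already make $\beta_{2}^{(1)}$ large). In effect your third paragraph proposes to re-derive the necessary condition of \cite{BS1}, whose proof is a genuinely involved multiscale $L^{2}$-type argument relating the $\beta$-numbers of $\mu$ to the parametrizations of the carrying curves; the winding heuristic is not a substitute for it. The efficient fix is simply to cite that theorem and perform the division by $\beta_{0}^{2}$, which is exactly how \cite[Corollary 1.12]{BS2} is obtained.
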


It is not hard to show that if $\mu$ is a doubling measure with $\supp \mu=\bR^{d}$, $d\geq 2$, then there is $c>0$ depending on the doubling constant such that $\beta_{2}^{(1)}(\mu,B)\geq c>0$ for any ball $B\subseteq \bR^{d}$, so the above theorem characterizes all $1$-rectifiable doubling measures with support equal to all of $\bR^{d}$. 

In this short note, we take a different approach and provide a complete classification of $1$-rectifiable doubling measures not just with support equal to $\mathbb{R}^{d}$ but with support equal to any topologically connected metric space. It turns out that the rectifiable part of such a measure coincides up to a set of $\mu$ measure zero with the set of points where the lower $1$-density is positive, where for $s>0$, we define the {\it lower $s$-density} as
\[\underline{D}^{s}(\mu,x):=\liminf_{r\rightarrow 0} \frac{\mu(B_{X}(x,r))}{r^{s}}.\]

\begin{theorem}[Main Theorem]
Let $\mu$ be a doubling measure whose support is a topologically connected metric space $X$ and let $E\subseteq X$ be compact. Then $E$ is $(\mu,1)$-rectifiable if and only if $\underline{D}^{1}(\mu,x)>0$ for $\mu$-a.e. $x\in E$.
\end{theorem}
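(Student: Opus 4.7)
We treat the two directions separately.

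\emph{Rectifiability implies positive lower density.} Assume $E$ is $(\mu,1)$-rectifiable, so modulo a $\mu$-null set it lies in $\bigcup_{i} \Gamma_{i}$, where $\Gamma_{i}=f_{i}(K_{i})$ with $K_{i}\subseteq\bR$ compact and $f_{i}$ Lipschitz; in particular each $\cH^{1}(\Gamma_{i})<\infty$. Since $\mu$ is doubling, the Lebesgue differentiation theorem applies, so at $\mu$-a.e.\ $x\in\Gamma_{i}$ one has $\mu(B_{X}(x,r))=(1+o(1))\,\mu(\Gamma_{i}\cap B_{X}(x,r))$ as $r\to 0$. Thus it suffices to show $\liminf_{r\to 0}\mu|_{\Gamma_{i}}(B_{X}(x,r))/r>0$ at $\mu|_{\Gamma_{i}}$-a.e.\ point. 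I would do this by decomposing $\mu|_{\Gamma_{i}}=g\,\cH^{1}|_{\Gamma_{i}}+\sigma$ into absolutely continuous and singular parts with respect to $\cH^{1}|_{\Gamma_{i}}$. Kirchheim's density theorem (which gives $\cH^{1}(\Gamma_{i}\cap B_{X}(x,r))/(2r)\to 1$ at $\cH^{1}$-a.e.\ $x\in\Gamma_{i}$) handles the absolutely continuous part, while the mutual singularity of $\sigma$ and $\cH^{1}|_{\Gamma_{i}}$ forces $\sigma(B_{X}(x,r))/\cH^{1}|_{\Gamma_{i}}(B_{X}(x,r))\to\infty$ on the singular support. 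Either contribution yields a positive lower $1$-density.

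\emph{Positive lower density implies rectifiability.} Assume $\underline{D}^{1}(\mu,x)>0$ at $\mu$-a.e.\ $x\in E$. Decompose $E=N\cup\bigcup_{n\in\bN}E_{n}$, where $\mu(N)=0$ and
\[ E_{n}:=\bigl\{x\in E : \mu(B_{X}(x,r))\geq r/n \text{ for all } r\in(0,1/n]\bigr\}. \]
A short argument (based on $B_{X}(x_{k},r-d(x_{k},x))\subseteq B_{X}(x,r)$) shows each $E_{n}$ is closed in $E$, hence compact. Fix $n$ and set $F=E_{n}$. A disjoint-balls packing argument shows that any $r$-separated subset of $F$ has cardinality $\lesssim 1/r$ (each associated ball of radius $r/2$ carries $\mu$-mass at least $r/(2n)$), and so $\cH^{1}(F)<\infty$. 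The main task is now to cover $F$ by a countable family of Lipschitz curves. For this I would enlarge $F$ to a compact, arc-connected superset $F^{*}\supseteq F$ with $\cH^{1}(F^{*})<\infty$, using the topological connectedness of $X$ to add short connectors between the components of $F$; the doubling of $\mu$ together with the packing bound on $F$ should keep the cumulative length of the added connectors finite. Once $F^{*}$ is compact, arc-connected, and of finite $\cH^{1}$-measure, a classical parametrization result (Eilenberg--Harrold) exhibits $F^{*}$ as a Lipschitz image of $[0,1]$, making $F$ (and hence $E$) $(\mu,1)$-rectifiable.

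The principal obstacle is the filling-in construction in the second direction: topological connectedness is strictly weaker than path-connectedness, so the connecting arcs must be produced indirectly from the interplay between the connectedness of $X$ and the doubling of $\mu$, and their total length must be controlled. Executing this step without inflating the Hausdorff measure is the technical heart of the proof, and is presumably where the porosity analysis advertised in the keywords plays its role.
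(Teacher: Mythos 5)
Your forward direction (``rectifiable $\Rightarrow$ positive lower density'') has a genuine gap at the singular part. The measure $\sigma$ is carried by a set of $\cH^{1}|_{\Gamma_{i}}$-measure zero, and Kirchheim's density theorem controls $\cH^{1}(\Gamma_{i}\cap B_{X}(x,r))$ only at $\cH^{1}$-a.e.\ point, so at $\sigma$-typical points you have no lower bound on $\cH^{1}|_{\Gamma_{i}}(B_{X}(x,r))$ in terms of $r$; hence the asserted blow-up $\sigma(B_{X}(x,r))/\cH^{1}|_{\Gamma_{i}}(B_{X}(x,r))\to\infty$ (itself a differentiation statement requiring a Besicovitch- or Vitali-type theorem that is not available for $\cH^{1}|_{\Gamma_{i}}$ in a general metric space, nor after embedding into $C(X)$) does not yield $\liminf_{r\to 0}\sigma(B_{X}(x,r))/r>0$: ``infinity times $o(r)$'' can still be $o(r)$. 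Note also that $\Gamma_{i}=f_{i}(K_{i})$ with $K_{i}$ compact need not be connected, so there is no pointwise bound $\cH^{1}(\Gamma_{i}\cap B_{X}(x,r))\geq r$ to fall back on. The paper's argument avoids all of this: extend each $f_{i}$ affinely over the complementary intervals so that the union $G$ of the images is a finite union of connected curves; then $\cH^{1}(G\cap B_{X}(x,r))\geq r$ at \emph{every} $x\in G$ for $r$ below the minimal diameter, and a single $5r$-covering comparison gives $\mu(\{\underline{D}^{1}(\mu,\cdot)=0\}\cap G)\leq \ve\,\cH^{1}(G)$ for every $\ve>0$ --- no Lebesgue decomposition, no Kirchheim, and in fact no doubling is needed for this direction.

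For the converse, your outline (split off $E_{n}$ with $\mu(B_{X}(x,r))\geq r/n$, get $\cH^{1}(E_{n})<\infty$ by packing, connect up, parametrize) matches the paper's strategy, but the step you defer is exactly the one that cannot be carried out as you describe: a topologically connected metric space need not be path-connected and may contain no rectifiable arcs at all, so ``using the topological connectedness of $X$ to add short connectors'' inside $X$ is not possible in general, and your arc-connected $F^{*}\subseteq X$ need not exist. The paper instead builds the connectors \emph{outside} $X$, in an enlarged metric space $Y$ obtained by gluing abstract polygonal bridges $[x,y]^{*}$ between net points of the Christ--David cubes; the total bridge length is controlled not by a packing bound on $F$ alone but by the Carleson-type porosity estimate of Lemma \ref{l:porous} together with $\ell(\Delta)\lesssim\mu(\Delta)$ (this is where the lower-density hypothesis enters), and the connectedness of $X$ is used only once, nontrivially, in Lemma \ref{l:connected}: if $E$ together with the bridges could be separated by open sets, one finds a point of $X$ deep inside a hole of $E$ at some scale, forcing the corresponding cube to be porous and hence already bridged, a contradiction. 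Since the bridges lie in $Y$, one parametrizes $\Gamma\subseteq Y$ by a Lipschitz $g:[0,1]\to Y$ and restricts to $g^{-1}(E)$ to obtain a Lipschitz map into $X$, which is all the definition of $(\mu,1)$-rectifiability requires. These missing ingredients --- the auxiliary space, the porosity Carleson estimate, and the separation argument --- constitute the technical heart of the proof and are not supplied by your proposal.
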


Note that there are no other topological or geometric restrictions on $X$: the support of $\mu$ may have topological dimension two (like $\bR^{2}$ for example), yet if $\underline{D}^{1}(\mu,x)>0$ $\mu$-a.e., then $\mu$ is supported on a countable union of Lipschitz images of $\bR$. Also observe that the condition $\underline{D}^{1}(\mu,x)>0$ is a weaker condition than \eqn{BS}. An interesting corollary of the Main Theorem and \Theorem{BS} is the following:

\begin{corollary}
If $\mu$ is a doubling measure in $\bR^{d}$ with connected support such that $\liminf_{r\rightarrow 0} \beta_{2}^{(1)}(\mu,B_{\bR^{d}}(x,r))>0$ and $\underline{D}^{1}(\mu,x)>0$ $\mu$-a.e., then \eqn{BS} holds.
\end{corollary}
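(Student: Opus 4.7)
The plan is to derive this corollary as a direct combination of the Main Theorem of this paper with Theorem~\ref{t:BS}. The Main Theorem converts the $\underline{D}^{1}>0$ hypothesis into $1$-rectifiability of $\mu$, and Theorem~\ref{t:BS} then converts $1$-rectifiability (in the presence of the $\beta_{2}^{(1)}$ hypothesis) into the summability condition \eqref{e:BS}. The content of the corollary is precisely that one can trade the weaker density hypothesis for the stronger summability condition, provided the $\beta$-numbers are bounded below.

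To carry out the first step, write $X=\supp\mu$; this is topologically connected by hypothesis and, as a closed subset of $\bR^{d}$, is also $\sigma$-compact, so fix a compact exhaustion $X=\bigcup_{k=1}^{\infty}E_{k}$. The assumption $\underline{D}^{1}(\mu,x)>0$ $\mu$-a.e.\ on $X$ restricts to the same statement $\mu$-a.e.\ on each $E_{k}$, so the Main Theorem (which requires only that the support be connected and $E$ compact) gives that each $E_{k}$ is $(\mu,1)$-rectifiable. Combining the countable families of Lipschitz images produced across all $k$ into one countable family yields $\mu(\supp\mu\setminus\bigcup_{i}\Gamma_{i})=0$, i.e., $\mu$ itself is $1$-rectifiable in the sense of the definition from the introduction.

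For the second step, the hypothesis $\liminf_{r\to 0}\beta_{2}^{(1)}(\mu,B_{\bR^{d}}(x,r))>0$ for $\mu$-a.e.\ $x$ places us exactly in the setting of Theorem~\ref{t:BS}, which under that hypothesis asserts the equivalence of $1$-rectifiability of $\mu$ with the summability condition \eqref{e:BS}. Since $1$-rectifiability has just been established, the ``only if'' direction of Theorem~\ref{t:BS} delivers \eqref{e:BS} at $\mu$-a.e.\ point, which is the desired conclusion. There is no substantive technical obstacle here; the only piece of bookkeeping is the $\sigma$-compactness reduction in step one, needed because the Main Theorem is formulated for compact $E\subseteq X$ rather than for $X$ itself, and both $\mu$-a.e.\ hypotheses are stable under passing to such an exhaustion.
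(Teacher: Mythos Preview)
Your argument is correct and matches the paper's intended approach: the corollary is stated immediately after the Main Theorem as a direct consequence of combining it with Theorem~\ref{t:BS}, and the paper gives no separate proof. The $\sigma$-compact exhaustion you invoke is a harmless bookkeeping step (and could alternatively be bypassed by appealing to Lemma~\ref{l:mainlemma}, which already handles the full set $\{x\in X:\underline{D}^{1}(\mu,x)>0\}$ without a compactness restriction on $E$).
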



The authors thank Raanan Schul for his encouragement and helpful discussions which improved the result, as well as John Garnett and the anonymous referee whose advice greatly improved the readability of the paper.

\section{Proof of the Main Theorem: Sufficiency}

When dealing with any metric space $X$, we will let $B_{X}(x,r)$ denote the set of points {\it in $X$} of distance less than $r>0$ from $x$. If $B=B_{X}(x,r)$ and $M>0$, we will denote $MB=B_{X}(x,Mr)$. For a Borel set $A\subseteq X$, we define the (spherical) $1$-Hausdorff measure as
\[\cH_{\delta}^{1}(A)=\inf\ck{\sum_{i=1}^{\infty} 2r_{i}:A\subseteq \bigcup_{i=1}^{\infty}B_{X}(x_{i},r_{i}), \;\; x_{i}\in A, r_{i}\in (0,\delta)}\]
and $\cH^{1}(A)=\inf_{\delta>0}\cH^{1}_{\delta}(A)$. 

For $A,B\subseteq X$ we set
\[\dist(A,B)=\inf\{|x-y|:x\in A,y\in B\}\]
and for $x\in X$, $\dist(x,A)=\dist(\{x\},A)$. \\ 

\begin{remark}\label{r:C(X)}
By the Kuratowski embedding theorem, if $X$ is separable (which happens, for example, if $X=\supp \mu$ for a locally finite measure $\mu$), $X$ is isometrically embeddable into $C(X)$, where $C(X)$ is the Banach space of bounded continuous functions on $X$ equipped with the supremum norm $|f|=\sup_{x\in X}|f(x)|$. Thus, we can assume without loss of generality that $X$ is the subset of a complete Banach space, and we will abuse notation by calling this space $C(X)$ as well, so that $X\subseteq C(X)$.
\end{remark}

The forward direction of the Main Theorem is proven for general measures in Euclidean space in \cite[Lemma 2.7]{BS1}, where in fact they prove a higher dimensional version. Below we provide a proof that works for metric spaces in the one-dimensional case.

\begin{proposition}
Let $\mu$ be a finite measure with $X:=\supp \mu$ a metric space and suppose $\mu$ is $1$-rectifiable. Then $\underline{D}^{1}(\mu,x)>0$ for $\mu$-a.e. $x\in \supp \mu$.
\end{proposition}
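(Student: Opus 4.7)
The plan is to reduce the statement to the one-dimensional case by pulling $\mu$ back to $\bR$ through a Borel section of each parameterizing Lipschitz map, after which the conclusion follows from Lebesgue--Besicovitch differentiation on the line. Since $\mu$ is $1$-rectifiable, there is a $\mu$-null set outside a countable union of Lipschitz images $\Gamma_i = f_i(E_i)$ with $E_i \subseteq \bR$ and $f_i : E_i \to X$ Lipschitz. After partitioning each $E_i$ into bounded pieces, passing to their closures, and extending each $f_i$ continuously (invoking completeness of the ambient space as in \Remark{C(X)}), I may assume each $E_i$ is compact. It therefore suffices to show $\underline{D}^1(\mu,x) > 0$ for $\mu|_{\Gamma_i}$-a.e. $x \in \Gamma_i$ for each fixed $i$, so I drop the subscript and write $\Gamma = f(E)$, $L = \Lip(f)$.

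Next I construct a Borel section $g : \Gamma \to E$ of $f$ by $g(x) = \min f^{-1}(x)$. This is well-defined since $f^{-1}(x)$ is a nonempty closed subset of the compact set $E$, and it is Borel-measurable because the sublevel set $\{x \in \Gamma : g(x) \leq t\} = f(E \cap (-\infty,t])$ is compact as a continuous image of a compact set. Note that $g$ is injective and $f \circ g = \id_\Gamma$. Define the pushforward $\nu := g_\ast(\mu|_\Gamma)$, a finite Borel measure on $\bR$ supported in the bounded set $E$. The Lipschitz estimate $|z-x|_X = |f(g(z))-f(g(x))|_X \leq L|g(z)-g(x)|$ gives the inclusion $g^{-1}(B_\bR(g(x), r/L)) \subseteq B_X(x,r) \cap \Gamma$, and therefore
\[
\nu(B_\bR(g(x), r/L)) \leq \mu(B_X(x,r)) \quad \text{for every } x \in \Gamma,\ r > 0,
\]
which on taking $\liminf$ as $r \to 0$ yields $\underline{D}^1(\nu, g(x)) \leq L\,\underline{D}^1(\mu, x)$.

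The remaining ingredient is that $\underline{D}^1(\nu, y) > 0$ for $\nu$-a.e. $y \in \bR$, a classical consequence of Lebesgue--Besicovitch differentiation: decomposing $\nu = \nu_{\mathrm{ac}} + \nu_s$ relative to Lebesgue measure, one has $\lim_{r \to 0} \nu(B(y,r))/(2r) = \infty$ at $\nu_s$-a.e. $y$, and the same limit equals the positive Radon--Nikodym derivative $d\nu/d\mathcal{L}^1(y)$ at $\nu_{\mathrm{ac}}$-a.e. $y$. Consequently $F' := \{y \in \bR : \underline{D}^1(\nu, y) = 0\}$ satisfies $\nu(F') = 0$. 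Writing $F = \{x \in \Gamma : \underline{D}^1(\mu, x) = 0\}$, the Lipschitz inequality above gives $g(F) \subseteq F'$, and the injectivity of $g$ forces $F = g^{-1}(g(F)) \subseteq g^{-1}(F')$, so $\mu(F) \leq \mu(g^{-1}(F')) = \nu(F') = 0$, as required.

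The main conceptual point, as I see it, is the construction of the Borel section $g$ together with the Lipschitz bookkeeping that transports the density information from $X$ back to $\bR$; once this reduction is in hand, every remaining ingredient is classical. A direct metric-space attempt via the $5r$-covering lemma inside $X$ is hampered by the absence of a doubling hypothesis on $\mu$ (one cannot compare $\mu(B(x_j, 5r_j))$ with $\mu(B(x_j, r_j))$), and the pullback cleanly circumvents this obstacle.
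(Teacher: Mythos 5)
Your proof is correct, but it takes a genuinely different route from the paper. The paper argues directly in the metric space: it extends the parametrizations affinely to all of $[0,1]$ (into $C(X)$, as in \Remark{C(X)}) so that the images form a connected set $G$ of finite length with $\cH^{1}(B_{C(X)}(x,r)\cap G)\geq r$, and then runs a Vitali $5r$-covering argument on the set where the lower density vanishes, getting $\mu(F\cap G)\leq \ve\,\cH^{1}(G)$ and letting $\ve\to0$, $\delta\to 0$. You instead build a Borel section $g(x)=\min f^{-1}(x)$ of each parametrization (your measurability argument via $\{g\leq t\}=f(E\cap(-\infty,t])$ is fine once $E$ is compact, which your reduction arranges, at the harmless cost that the extended $f$ may take values in $\cnj{X}\subseteq C(X)$), push $\mu|_{\Gamma}$ forward to a finite measure $\nu$ on $\bR$, transfer densities via $\nu(B_{\bR}(g(x),r/L))\leq \mu(B_{X}(x,r))$, and invoke Lebesgue--Besicovitch differentiation on the line to see $\underline{D}^{1}(\nu,y)>0$ $\nu$-a.e. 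This is sound: it trades the paper's elementary connectedness-plus-covering argument (which needs only the basic $5r$-covering lemma and no differentiation theory) for classical one-dimensional differentiation machinery, and it avoids any use of $\cH^{1}$ or of connectivity of the extended images. One remark, though: your closing claim that a direct covering argument in $X$ is blocked by the lack of a doubling hypothesis is not accurate --- the paper's proof is exactly such an argument and uses no doubling, since from $\underline{D}^{1}(\mu,x)=0$ one may choose the radius so that $\mu(B_{X}(x,5r_{x}))<\ve r_{x}$ directly, so no comparison of $\mu(5B)$ with $\mu(B)$ is ever needed; this misjudgment does not affect the validity of your own argument.
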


\begin{proof}
Let 
\[F=\{x\in \supp\mu:\underline{D}^{1}(\mu,x)=0\}\]
and let $\ve,\delta>0$. Since $\mu$ is rectifiable, there are Lipschitz functions $f_{i}:A_{i}\rightarrow X$, where $A_{i}\subseteq [0,1]$ are compact Borel sets of positive measure and $i=1,...,N$, so that 
\[ \mu\ps{E\backslash \bigcup_{i=1}^{N}f_{i}(A_{i})}<\delta.\]
We can extend each $f_{i}$ affinely on the intervals in the complement of $A_{i}$ to a Lipschitz function $f_{i}:[0,1]\rightarrow C(X)$. Let $d=\min_{i=1,...,N}\diam f_{i}([0,1])$, so that $r\in (0,d)$ and $x\in G:=\bigcup_{i=1}^{N} f_{i}([0,1])$ implies $\cH^{1}(B_{C(X)}(x,r)\cap G)\geq r$ (simply because now the images of the $f_{i}$ are connected).

For each $x\in F\cap G$, there is $r_{x}\in (0,d/5)$ so that $\mu(B_{X}(x,5r_x))<\ve r_x$. By the Vitali Covering Theorem (see \cite[Lemma 1.2]{H}), there are countably many disjoint balls balls $B_{i}=B_{X}(x_{i},r_{i})$ with centers in $F$ so that $\bigcup 5B_{i}\supseteq F$. Thus,
\begin{align*}
\mu(F\cap G)
& \leq \sum_{i} \mu(5B_{i})
\leq \ve\sum_{i} r_{i}
\leq \ve\sum_{i} \cH^{1}(B_{C(X)}(x_{i},r_{i})\cap G)\\
& \leq \ve\cH^{1}(G).\end{align*}
Thus,
\[\mu(F)<\delta+ \ve \cH^{1}(G).\]
Keeping $\delta$ (and hence $G$) fixed and sending $\ve\rightarrow 0$, we get $\mu(F)<\delta$ for all $\delta>0$ and thus $\mu(F)=0$. 
\end{proof}

\section{Proof of the Main Theorem: Necessity}

What remains is to prove the reverse direction of the Main Theorem, which we summarise in the next lemma.

\begin{lemma}\label{l:mainlemma}
Let $\mu$ be a doubling measure with constant $C_{\mu}>0$ and support $X$, a topologically connected metric space. Then $\{x\in X: \underline{D}^{1}(\mu,x)>0\}$ is $(\mu,1)$-rectifiable.
\end{lemma}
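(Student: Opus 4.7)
The plan is to reduce via countable decomposition to a compact set $E$ with uniform lower $1$-density, show $\cH^1(E)<\infty$, and then construct a compact connected set of finite $\cH^1$-measure containing $E$, which is automatically $(\mu,1)$-rectifiable by the classical Eilenberg--Harrold theorem that a continuum of finite $1$-Hausdorff measure is a Lipschitz image of a compact interval.

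First, I would partition $\{x\in X:\underline{D}^1(\mu,x)>0\}$ as $\bigcup_{n\geq 1}E_n$, where $E_n=\{x:\mu(B_X(x,r))\geq r/n$ for all $r\in(0,1/n]\}$, intersect with a countable family of bounded balls, and pass to closures. This reduces the statement to showing that any compact set $E$ contained in a ball of finite $\mu$-mass on which $\mu(B_X(x,r))\geq cr$ for all $x\in E$ and $r\leq r_0$ is $(\mu,1)$-rectifiable. A standard Vitali $5r$-covering argument then yields $\cH^1(E)<\infty$: any cover of $E$ by balls $B_X(x_i,r_i)$ with $x_i\in E$ and $r_i\leq \delta\leq r_0$ contains a pairwise disjoint subfamily whose $5$-enlargements still cover $E$, and one estimates
\[
\sum_i 10 r_i \leq \frac{10}{c}\sum_i\mu(B_X(x_i,r_i))\leq \frac{10}{c}\mu(X)<\infty;
\]
letting $\delta\to 0$ gives the claim.

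The core step is to build a compact connected set $K\subseteq X$ with $E\subseteq K$ and $\cH^1(K)<\infty$. I plan to proceed scale-by-scale: for each $k\geq 0$, fix a maximal $2^{-k}r_0$-separated subset of $E$ and, using the connectedness of $X$, join consecutive net points by short continua in $X$. The lower density bound controls the cardinality of the $2^{-k}r_0$-net by $\lesssim 2^k\mu(X)/(cr_0)$, and each joining continuum should contribute length $O(2^{-k}r_0)$; arranging the nets hierarchically (so that refining from scale $k$ to $k+1$ only adds local corrections) should allow the multi-scale sum of lengths to telescope and be controlled by $\cH^1(E)+\diam E$.

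With such a $K$ in hand, Eilenberg--Harrold exhibits $K$ as a Lipschitz image of $[0,1]$, so $E\subseteq K$ is $(\mu,1)$-rectifiable. The principal obstacle will be the scale-by-scale construction of the connecting continua, since topological connectedness of $X$ does not, by itself, furnish \emph{short} paths between nearby points. Extracting continua of length $\lesssim 2^{-k}r_0$ will require the uniform perfectness of $X$ (a consequence of doubling plus connectedness), together with a chaining argument across consecutively finer scales. Arranging the continua so that their union at all scales is itself connected and of finite total length is the delicate bookkeeping step; it is the analog in our metric setting of the Analyst's Traveling Salesman construction of Jones--Okikiolu--Schul, with connectedness of $X$ playing the role that flatness (measured by $\beta$-numbers) plays in the Euclidean case.
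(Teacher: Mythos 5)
There is a genuine gap, and it is exactly at the step you flag as ``the delicate bookkeeping step'': the connecting continua you want do not exist inside $X$ in general. Topological connectedness of the support (even together with doubling, hence uniform perfectness) furnishes $\epsilon$-chains of points, but it does not furnish continua of finite $\cH^{1}$-measure joining nearby points, let alone continua of diameter or length $\lesssim 2^{-k}r_{0}$: in a snowflake-type connected doubling support every nondegenerate continuum has infinite length, and nothing in the hypotheses rules out such behaviour of $X$ away from $E$. This is precisely why the paper does \emph{not} build the connected set inside $X$: it embeds $X$ into an auxiliary metric space $Y$ and adjoins abstract ``bridges'' $[x,y]^{*}$ between net points, so that the curve $\Gamma=E\cup\bigcup_{\Delta\in\cP}\Gamma_{\Delta}$ lives in $Y$, not in $X$. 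This is still legitimate for $(\mu,1)$-rectifiability, since one parametrises $\Gamma=f([0,1])$ with $f:[0,1]\to Y$ Lipschitz and then restricts $f$ to $f^{-1}(E)$ to get a Lipschitz map into $X$; connectedness of $X$ is used only once, in Lemma \ref{l:connected}, to produce a point of $X$ outside a putative separation of $\Gamma$, which forces a porous cube and a bridge across the separation, a contradiction.

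A second, independent problem is the length estimate. Joining \emph{all} consecutive net points at scale $2^{-k}r_{0}$ costs $\approx N_{k}\,2^{-k}r_{0}\gtrsim \mu(E)/c$ \emph{per scale}, which does not telescope: summed over $k$ it diverges, and your claim that refinements ``only add local corrections'' has no mechanism behind it. The correct mechanism is to add bridges only where $E$ has holes, i.e.\ at the porous cubes $\cP$ of Lemma \ref{l:porous}, and to control their total cost by the Carleson packing estimate \eqref{e:sumP} combined with the lower-density bound $\ell(\Delta)\lesssim\mu(\Delta)$ from \eqref{e:mu>r}; where there are no holes, $E$ itself supplies the connection and costs only $\cH^{1}(E)\leq 10\mu(E)$. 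Your outer reduction to compact pieces with uniform lower density, the Vitali bound $\cH^{1}(E)\lesssim\mu(E)/c$, and the appeal to a parametrisation theorem for continua of finite length all match the paper, but without the bridge construction in an enlarged space and the porous-cube Carleson estimate the core of the argument is missing.
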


To prove \Lemma{mainlemma}, it suffices to show the following lemma.

\begin{lemma}\label{l:reduction}
Let $\mu$ be a doubling measure and support $X$,a  topologically connected complete metric space. If $E\subseteq X$ is a compact set for which $E\subseteq B_{X}(\xi_{0},r_{0}/2)$ for some $\xi_{0}\in X$, $r_{0}>0$, and 
 \begin{equation}\label{e:mu>r}
\mu(B_{X}(x,r))\geq 2r\mbox{ for all }x\in E \mbox{ and }r\in (0,r_{0}).
\end{equation}
then $E=f(A)$ for some $A\subseteq \bR$ and Lipschitz function $f:A\rightarrow X$.
\end{lemma}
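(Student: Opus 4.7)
The plan is to reduce the lemma to a structural statement and then invoke a classical metric-space parametrization theorem. I would first show that $E$ is contained in a compact, connected subset $\Gamma\subseteq X$ with $\cH^{1}(\Gamma)<\infty$. Once such $\Gamma$ is produced, the Hahn--Mazurkiewicz / Wazewski-type parametrization theorem for metric spaces (in its effective form, e.g.\ due to Alestalo--Trotsenko--V\"ais\"al\"a) supplies a Lipschitz surjection $\gamma\colon[0,L]\to\Gamma$ with $L\lesssim\cH^{1}(\Gamma)$. Setting $A:=\gamma^{-1}(E)\subseteq[0,L]$ (which is compact, since $E$ is compact and $\gamma$ continuous) and $f:=\gamma|_{A}$ yields the required $f\colon A\to X$ with $f(A)=E$.

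The first preliminary is the bound $\cH^{1}(E)<\infty$. For any $\delta\in(0,r_{0})$, Vitali produces pairwise disjoint balls $\{B_{X}(x_{i},r_{i})\}_{i}$ with $x_{i}\in E$, $r_{i}<\delta/5$, and $E\subseteq\bigcup_{i}B_{X}(x_{i},5r_{i})$. The density hypothesis $\mu(B_{X}(x_{i},r_{i}))\geq 2r_{i}$ gives
\[
\sum_{i}r_{i}\;\leq\;\tfrac{1}{2}\sum_{i}\mu(B_{X}(x_{i},r_{i}))\;\leq\;\tfrac{1}{2}\mu(B_{X}(\xi_{0},2r_{0})),
\]
whence $\cH^{1}_{\delta}(E)\leq 10\sum_{i}r_{i}\leq 5\mu(B_{X}(\xi_{0},2r_{0}))$, finite by doubling; letting $\delta\to 0$ gives the claim.

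The bulk of the argument is the construction of $\Gamma$, which I would do by multi-scale approximation. For each $k\geq 0$ pick a maximal $2^{-k}r_{0}$-separated subset $N_{k}\subseteq E$; the same disjoint-balls argument yields $|N_{k}|\cdot 2^{-k}r_{0}\leq C$ uniformly in $k$. The objective is then to produce, for each $k$, a compact connected set $\Gamma_{k}\subseteq X$ with $N_{k}\subseteq\Gamma_{k}$ and $\cH^{1}(\Gamma_{k})\leq C'$ independent of $k$. The doubling condition on $\mu$ forces the metric space $X$ to be doubling, so bounded closed subsets of $X$ are totally bounded, hence compact by completeness. Blaschke's selection theorem then extracts a Hausdorff-convergent subsequence $\Gamma_{k_{j}}\to\Gamma$ with $\Gamma$ compact and connected, and Golab's semicontinuity theorem for $\cH^{1}$ on continua gives $\cH^{1}(\Gamma)\leq C'$. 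Since $N_{k_{j}}$ is $2^{-k_{j}}r_{0}$-dense in $E$, the inclusion $E\subseteq\Gamma$ follows.

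The principal obstacle is the uniform length bound for $\Gamma_{k}$. Topological connectedness of $X$ alone only guarantees the existence of \emph{some} continuum joining two given points, with no a priori $\cH^{1}$-control. The density hypothesis $\mu(B_{X}(x,r))\geq 2r$ together with the doubling property must be invoked to extract a form of local quasi-convexity near $E$: points $x,y\in E$ at distance $r$ should be joined in $X$ by a continuum of $\cH^{1}$-measure $\lesssim r$, built out of shorter continua along a chain of intermediate points supplied by the density hypothesis. Granting this, a spanning tree on $N_{k}$ has total length $\lesssim|N_{k}|\cdot 2^{-k}r_{0}\leq C$; however, summing naively across all scales diverges, so the construction must be organized inductively, building $\Gamma_{k+1}$ from $\Gamma_{k}$ by attaching only branches to the newly introduced points of $N_{k+1}\setminus N_{k}$ and reusing inherited structure whenever possible. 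This telescoping organization --- together with the careful extraction of short joining continua from the doubling plus density hypothesis --- is where I expect the real technical work to lie.
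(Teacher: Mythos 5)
There is a genuine gap, and it sits exactly at the point you yourself flag as ``where the real technical work'' lies: the construction, inside $X$, of connected sets $\Gamma_{k}\supseteq N_{k}$ with uniformly bounded $\cH^{1}$, via a ``local quasi-convexity near $E$'' principle (points $x,y\in E$ with $|x-y|=r$ joined by a continuum in $X$ of length $\lesssim r$). That principle does not follow from the hypotheses. The doubling of $\mu$ and the lower bound \eqref{e:mu>r} control mass, not the length of connections through $X$: for instance, attach to a line two thin parallel ``teeth'' of height $h$ whose tips lie at mutual distance $x\ll h$; such configurations can be planted at all scales with $h/x$ arbitrarily large while $\cH^{1}$ restricted to the resulting set remains doubling and satisfies \eqref{e:mu>r} at the tips, yet any continuum in $X$ joining the two tips has length $\gtrsim h\gg x$. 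So the pairwise short-connection claim is simply false under the hypotheses, and the proposed ``chain of intermediate points'' and telescoping repair is only a heuristic --- no mechanism is given that converts \eqref{e:mu>r} plus doubling into a length bound for continua \emph{inside} $X$, and topological connectedness by itself gives none (it does not even guarantee that two points of $X$ lie in a common continuum, let alone one of finite length). Everything else in your outline (the Vitali argument giving $\cH^{1}(E)\lesssim\mu(E)$, Blaschke selection, Golab semicontinuity, parametrization of finite-length continua, and restricting the parametrization to $\gamma^{-1}(E)$) is sound but peripheral; the heart of the proof is missing.

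The paper's proof is designed precisely to avoid connecting through $X$. It builds an auxiliary metric space $Y\supseteq X$ by gluing in external ``bridge'' arcs $[x,y]^{*}$ between net points, and sets $\Gamma=E\cup\bigcup_{\Delta\in\cP}\Gamma_{\Delta}$, where $\cP$ is the family of cubes near $E$ containing a hole of size $\delta\ell(\Delta)$; the total length of the bridges is controlled not by any quasiconvexity but by the Carleson-type packing estimate \eqref{e:sumP} of Lemma \ref{l:porous} combined with \eqref{e:mu>r} (which converts $\ell(\Delta)$ into $\mu(\Delta)$), while $\cH^{1}(E)\leq 10\mu(E)$ handles $E$ itself. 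Connectedness of $X$ is used only qualitatively, in the contradiction argument of Lemma \ref{l:connected}: a separation of $\Gamma$ would produce a point of $X$ far from $\Gamma$, hence a porous cube in $\cP$ whose bridges would already join the two pieces. Finally, since rectifiability only requires $E=f(A)$ with $f:A\to X$ Lipschitz, one parametrizes $\Gamma\subseteq Y$ by a Lipschitz curve and restricts to the preimage of $E$, so the bridges never appear in the image; your plan forgoes exactly this flexibility by insisting that the connecting continuum lie in $X$, which is the step that cannot be carried out in general.
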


\begin{proof}[Proof of \Lemma{mainlemma} using \Lemma{reduction}]

First, note that if we define $\cnj{\mu}(A)=\mu(A\cap X)$, then $\cnj{\mu}$ is a doubling measure on $\cnj{X}$, where the closure is in $C(X)$ (recall \Remark{C(X)}). Moreover, the closure $\cnj{X}$ is still topologically connected but now is a complete metric space since $C(X)$ is complete. Thus, for proving \Lemma{mainlemma}, we can assume without loss of generality that $X$ is complete.

Let $F:=\{x\in X: \underline{D}^{1}(\mu,x)>0\}$.  For $j,k\in \bN$, let
\[F_{j,k}=\{x\in F: \mu(B_{X}(x,r))\geq r/j\mbox{ for }0<r<k^{-1}\}.\]
Then $F=\bigcup_{j,k\in \bN}F_{j,k}$. Furthermore, we can write $F_{j,k}$ as a countable union of sets $\{F_{j,k,\ell}\}_{\ell\in \bN}$ with diameters less than $\frac{1}{3k}$. It suffices then to show that each one of these sets is $1$-rectifiable. Fix $j,k,\ell\in \bN$. Then the measure $j\mu$ and the set $F_{j,k,\ell}$ satisfy the conditions for \Lemma{reduction} with $r_{0}=k^{-1}$, except that $F_{j,k,\ell}$ is not necessarily compact. However, $\cnj{F}_{j,k,\ell}$ is a closed set still satisfying these conditions, it is totally bounded since $\mu$ is doubling, and since $X$ is complete, the Heine-Borel theorem implies $\cnj{F}_{j,k,\ell}$ is compact. Thus, we can apply \Lemma{reduction} to get that $\cnj{F}_{j,k,\ell}$ is rectifiable. Since $F=\bigcup_{j,k,\ell} F_{j,k,\ell}$ we now have that $F$ is also rectifiable.
\end{proof}

The rest of the paper is devoted to proving \Lemma{reduction}, so fix $\mu$, $E$, $\xi_{0}$, and $r_{0}$ as in the lemma.\\

\begin{proof}[Proof of \Lemma{reduction}]

We will require the notion of dyadic cubes on a metric space. This theorem was originally developed by David and Christ (\cite{David88}, \cite{Christ-T(b)}), but the current formulation we take from Hyt\"onen and Martikainen \cite{HM11}.

 \begin{theorem}
Let $X$ be a metric space equipped with a doubling measure $\mu$. Let $X_{n}$ be a nested sequence of maximal $\rho^{n}$-nets for $X$ where $\rho<1/1000$ and let $c_{0}=1/500$. For each $n\in\bZ$ there is a collection $\cD_{n}$ of ``cubes,'' which are Borel subsets of $X$ such that
\begin{enumerate}
\item for every $n$, $X=\bigcup_{\Delta\in \cD_{n}}\Delta$,
\item if $\Delta,\Delta'\in \cD=\bigcup \cD_{n}$ and $\Delta\cap\Delta'\neq\emptyset$, then $\Delta\subseteq \Delta'$ or $\Delta'\subseteq \Delta$,
\item for $\Delta\in \cD$, let $n(\Delta)$ be the unique integer so that $\Delta\in \cD_{n}$ and set $\ell(\Delta)=5\rho^{n(\Delta)}$. Then there is $\zeta_{\Delta}\in X_{n}$ so that
\[B_{X}(\zeta_{\Delta},c_{0}\ell(\Delta) )\subseteq \Delta\subseteq B_{X}(\zeta_{\Delta},\ell(\Delta))\]
and
\[ X_{n}=\{\zeta_{\Delta}: \Delta\in \cD_{n}\}.\]
\end{enumerate}
\label{t:Christ}
\end{theorem}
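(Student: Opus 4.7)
The theorem is the Christ--David dyadic cube construction in its Hyt\"onen--Martikainen refinement. My plan is to build the cubes as compatibly-chosen Voronoi cells of the given nested nets $X_n$, using a parent-child assignment to enforce nesting across levels.

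First, I would promote the nets to a tree. Since the nets are nested, $X_n \subseteq X_{n+1}$; for $\zeta \in X_{n+1}$ set $\pi(\zeta) = \zeta$ if $\zeta \in X_n$, and otherwise take $\pi(\zeta)$ to be the closest point of $X_n$ to $\zeta$, breaking ties by a fixed well-ordering of $\bigcup_m X_m$. Iterating gives an ancestor map $\pi^{m-n}: X_m \to X_n$ for $m \geq n$. For each $\zeta \in X_n$ the descendant set
\[D(\zeta) = \bigcup_{m \geq n} \{\zeta' \in X_m : \pi^{m-n}(\zeta') = \zeta\}\]
is nested across levels in the obvious way, and $\{D(\zeta)\}_{\zeta \in X_n}$ partitions $\bigcup_m X_m$ at each level.

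Second, I would extend this partition of $\bigcup_m X_m$ to a Borel partition of $X$. For each $x \in X$ and each $m$, let $p_m(x)$ be the closest point of $X_m$ to $x$ (tie-broken by the same well-ordering). Using the ancestor map to maintain consistency, I assign $x$ to the level-$n$ cube whose center is $\pi^{m-n}(p_m(x))$ for sufficiently large $m$, and check that this choice stabilizes in $m$ so that the cube is well-defined. The result is a partition of $X$ into Borel sets $\Delta$ indexed by centers $\zeta_\Delta \in X_{n(\Delta)}$, with properties (1) (coverage, from the maximality of the nets) and (2) (nesting, from the parent map) automatic.

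Third, I would verify property (3). The inner inclusion $B_X(\zeta_\Delta, c_0 \ell(\Delta)) \subseteq \Delta$ is immediate: since $X_n$ is $\rho^n$-separated and $c_0 \ell(\Delta) = \rho^n/100$, any $y$ with $d(y, \zeta_\Delta) < \rho^n/100$ has $\zeta_\Delta$ as its unique nearest point in $X_n$, so $y \in \Delta$ by construction. The outer inclusion $\Delta \subseteq B_X(\zeta_\Delta, \ell(\Delta))$ follows by a telescoping estimate: for any descendant $\zeta' \in X_m$ of $\zeta_\Delta$, each step $d(\pi^j(\zeta'), \pi^{j+1}(\zeta'))$ is bounded by $\rho^{m-j-1}$ by maximality of the corresponding net, so
\[d(\zeta', \zeta_\Delta) \leq \sum_{j=n}^{m-1} \rho^{j} < \frac{\rho^n}{1-\rho} < 2\rho^n < 5\rho^n = \ell(\Delta),\]
and closures carry the bound to all of $\Delta$. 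The main obstacle is ensuring the partition is genuinely Borel and that the tie-breaking rule is compatible across all levels simultaneously, since naive level-by-level Voronoi cells do not automatically nest; Hyt\"onen--Martikainen resolve this via a careful recursive scheme on the well-ordered nets, and adapting that bookkeeping is the only nontrivial step---the geometric estimates themselves are elementary consequences of $\rho^n$-separation and $\rho^n$-density.
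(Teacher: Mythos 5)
First, a point of context: the paper does not prove this statement at all --- it is quoted as a black box from David, Christ, and (in this formulation) Hyt\"onen--Martikainen \cite{HM11}, so there is no internal proof to compare against; your proposal has to stand on its own as a proof of the cube construction.

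As such, it has a genuine gap, and the gap sits exactly where the theorem's real content is. Your cube is defined by the rule ``assign $x$ to the level-$n$ center $\pi^{m-n}(p_m(x))$ for sufficiently large $m$, and check that this choice stabilizes.'' That stabilization is asserted, not proved, and it is false in general: for a point $x$ lying near the interface of two level-$n$ centers, the nearest level-$m$ net point $p_m(x)$ can be a descendant of one center for infinitely many $m$ and of the other for infinitely many $m$, since the ancestor chain of $p_m(x)$ may drift by as much as $\sum_{j\geq n+1}\rho^{j}+\rho^m$, and nothing forces consistency between different $m$. Handling precisely these ``boundary'' points --- so that each generation is a genuine Borel partition, the generations are nested as in (2), and every point is covered as in (1) --- is the recursive bookkeeping you defer to Hyt\"onen--Martikainen in your last sentence; but that bookkeeping \emph{is} the theorem, so deferring it amounts to citing the result you are meant to prove. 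Relatedly, your claim that nesting is ``automatic... from the parent map'' only holds once a single coherent assignment has been fixed for every $x$, which is the unresolved step. A smaller but real flaw: your inner-ball argument says that $d(y,\zeta_\Delta)<\rho^n/100$ makes $\zeta_\Delta$ the unique nearest point of $X_n$ to $y$, ``so $y\in\Delta$ by construction'' --- but in your construction membership is decided by the level-$n$ ancestor of $p_m(y)$, not by the nearest level-$n$ point, and these need not coincide. The conclusion is salvageable (because the nets are nested one can run the chain up to level $n+1$, get $d(\eta_{n+1},\zeta_\Delta)\leq \rho^{n+1}/(1-\rho)+\rho^n/100\ll \rho^n/2$, and conclude that the nearest-point parent of $\eta_{n+1}$ must be $\zeta_\Delta$), but that chain argument, using nestedness and the smallness of $\rho$ and $c_0$, is what needs to be written; the justification you give does not imply membership. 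The outer inclusion and the telescoping estimate are fine.
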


It is not necessary for there to exist a doubling measure but just that the metric space is geometrically doubling. Moreover, Hyt\"onen and Martikainen use sequences of sets $X_{n}$ slightly more general than maximal  nets, see \cite{HM11} for details. 

Let $X_{n}$ be a nested sequence of maximal $\rho^{n}$-nets for $X$ where $\rho<1/1000$ and $\cD$ the resulting cubes from \Theorem{Christ}. By picking our net points $X_{n}$ appropriately, we may assume that $E\subseteq \Delta_{0}\in \cD$. We recall a lemma \cite{AzzHarm}.

\begin{lemma} \cite[Section 3]{AzzHarm} 
Let $\mu$ be a $C_{\mu}$-doubling measure and let $\cD$ the cubes from \Theorem{Christ} for $X=\supp \mu$ with admissible constants $c_{0}$ and $\rho$. Let $E\subseteq \Delta_{0}\in \cD$ be a Borel set, $M>1$, $\delta>0$, and set
\begin{multline*}
\cP=\{\Delta\subseteq \Delta_{0}: 
\Delta\cap E\neq\emptyset, \exists \; \xi\in B_{X}(\zeta_{\Delta},M\ell(\Delta)) \\
\mbox{ such that } \dist(\xi,E)\geq \delta\ell(\Delta)\}.
\end{multline*}
Then there is $C_{1}=C_{1}(M,\delta,C_{\mu})>0$ so that, for all $\Delta'\subseteq \Delta_{0}$,
\begin{equation}
\sum_{\Delta\subseteq \Delta' \atop \Delta\in \cP} \mu(\Delta)\leq C_{1} \mu(\Delta').
\label{e:sumP}
\end{equation}
\label{l:porous}
\end{lemma}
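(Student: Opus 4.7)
The strategy is a stopping-time decomposition of the family $\cP\cap\{\Delta\subseteq\Delta'\}$ into ``generations'' of porous cubes, powered by the concrete holes that the porosity condition provides. First, for each $\Delta\in\cP$ I extract the hole $B_\Delta:=B_X(\xi_\Delta,\delta\ell(\Delta)/2)$, which is disjoint from $E$ and sits inside the enlargement $B_X(\zeta_\Delta,(M+\delta)\ell(\Delta))$; iterating the doubling property a bounded number of times gives $\mu(B_\Delta)\geq c_2\mu(\Delta)$ with $c_2=c_2(M,\delta,C_\mu)>0$. So every porous cube is accompanied by a hole of comparable mass.

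To make the combinatorics tractable, the next step is to discretize this hole. I fix $K=K(\rho,\delta)\in\bN$ large enough that $2\rho^K<\delta/2$ and let $\Delta^\star\in\cD_{n(\Delta)+K}$ be the unique Christ cube containing $\xi_\Delta$. A short inclusion check gives $\Delta^\star\subseteq B_\Delta$, hence $\Delta^\star\cap E=\emptyset$, and one more doubling comparison yields $\mu(\Delta^\star)\geq c_3\mu(\Delta)$ for some $c_3=c_3(M,\delta,\rho,C_\mu)>0$. The assignment $\Delta\mapsto\Delta^\star$ furthermore has multiplicity bounded by a constant $N_1=N_1(M,\rho,C_\mu)$: all preimages of a fixed $\Delta^\star$ live at the same generation $n(\Delta^\star)-K$ with centers inside a fixed multiple of $\ell(\Delta^\star)$ around $\zeta_{\Delta^\star}$, so geometric doubling of $X$ (inherited from the doubling of $\mu$) caps their number.

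With these tools, I would organize the porous cubes into a stopping-time tree rooted at $\cG_0:=\{\Delta'\}$, setting $\cG_{k+1}$ to be the collection of maximal cubes of $\cP$ strictly contained in some $R\in\cG_k$. Every element of $\cP\cap\{\Delta\subseteq\Delta'\}$ then belongs to a unique $\cG_k$, so the bound $\sum\mu(\Delta)\leq C_1\mu(\Delta')$ will follow by geometric summation once I establish the inductive decay
\[\sum_{\Delta\in\cG_{k+1},\,\Delta\subseteq R}\mu(\Delta)\;\leq\;(1-\eta)\,\mu(R),\qquad R\in\cG_k,\]
for some $\eta=\eta(M,\delta,\rho,C_\mu)\in(0,1)$; this iterates to $\sum_{\Delta\in\cG_k}\mu(\Delta)\leq(1-\eta)^k\mu(\Delta')$ and gives $C_1=1/\eta$. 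The plan for the decay is to charge the ``lost'' $\eta\mu(R)$ to the holes: each $\Delta\in\cG_{k+1}$ with $\Delta\subseteq R$ carries its $\Delta^\star$ of mass $\geq c_3\mu(\Delta)$, disjoint from $E$ and lying in a fixed enlargement $CR$; the multiplicity bound $N_1$ together with the maximality built into $\cG_{k+1}$ (which forces each $\Delta^\star$ into the $E$-free part of $CR$) should let one extract the factor $1-\eta$.

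The main obstacle is precisely this geometric decay. Since $M>1$, the hole $B_\Delta$ and hence $\Delta^\star$ may sit outside $\Delta$ (even outside $R$), and the $\Delta^\star$-cubes coming from distinct $\cG_{k+1}$-cubes can nest arbitrarily across generations; so summing $\sum\mu(\Delta^\star)\leq\mu(CR)\lesssim\mu(R)$ naively yields only the trivial bound $\sum\mu(\Delta)\leq\mu(R)$, not $\leq(1-\eta)\mu(R)$. Making the decay rigorous requires a delicate combinatorial argument combining the bounded multiplicity $N_1$, the mass comparability $\mu(\Delta^\star)\geq c_3\mu(\Delta)$, and the dyadic structure of Theorem~\ref{t:Christ} to show that the cubes $\Delta^\star$ collectively occupy a fixed positive fraction of $CR$ disjoint from $\bigcup\cG_{k+1}$, thereby producing the required deficit.
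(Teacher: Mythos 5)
Your preparatory steps (a hole ball of mass $\gtrsim\mu(\Delta)$, its discretization $\Delta^{\star}$, and the bounded multiplicity of $\Delta\mapsto\Delta^{\star}$ within a fixed generation) are fine, but the proof has a genuine gap exactly where you acknowledge it: the per-generation decay $\sum_{\Delta\in\cG_{k+1},\,\Delta\subseteq R}\mu(\Delta)\leq(1-\eta)\mu(R)$ is not only unproven, it is false in general. Since $M>1$ and the hole of a cube may lie entirely outside that cube, it can happen that \emph{every} child of $R$ meets $E$ and is porous: take, say, $\mu$ Lebesgue on $\bR^{2}$ and $E$ a grid of spacing $\approx c_{0}\rho\ell(R)$ inside $R$ with $\delta$ small; then every child of $R$ contains a grid point and a $\delta\rho\ell(R)$-hole, so the maximal $\cP$-cubes strictly inside $R$ are all the children of $R$, which partition $R$ up to measure zero, and the left-hand side equals $\mu(R)$. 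So the stopping-time/geometric-series scheme cannot be run in this form, and the remaining "delicate combinatorial argument" you defer to would have to overcome the fact that the holes $\Delta^{\star}$ attached to cubes at different scales can nest with unbounded overlap, which your fixed-offset choice $n(\Delta^{\star})=n(\Delta)+K$ gives you no control over.

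The paper avoids induction altogether by choosing the hole cube \emph{maximally with respect to $E$} rather than at a fixed number of generations below $\Delta$: it lets $\cM$ be the maximal cubes $Q$ with $2B_{Q}\subseteq E^{c}$ and assigns to each $\Delta\in\cP$ the cube $\tilde\Delta\in\cM$ containing $\xi_{\Delta}$. Maximality does two jobs at once. First, the parent of $\tilde\Delta$ meets $E$ within $2B_{\tilde\Delta^{1}}$, which forces $\delta\ell(\Delta)\leq\frac{4}{\rho}\ell(\tilde\Delta)$, while $\ell(\tilde\Delta)\leq\frac{2M}{c_{0}}\ell(\Delta)$; so $\ell(\tilde\Delta)\approx\ell(\Delta)$ and doubling gives both $\mu(\tilde\Delta)\gtrsim\mu(\Delta)$ and a multiplicity bound valid across \emph{all} scales, not just within one generation. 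Second, distinct cubes of $\cM$ are pairwise disjoint and disjoint from $E$, so summing over them inside $MB_{\Delta'}$ immediately gives $\sum_{\Delta\subseteq\Delta',\,\Delta\in\cP}\mu(\Delta)\lesssim\sum_{Q\in\cM}\mu(Q)\leq\mu(MB_{\Delta'})\lesssim\mu(\Delta')$. This scale-adapted, maximal choice of the hole cube is the missing idea; if you replace your $\Delta^{\star}$ by the maximal $E$-free cube containing $\xi_{\Delta}$, your first two paragraphs essentially become the paper's proof and the stopping-time tree is unnecessary.
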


The theorem is stated in \cite{AzzHarm} in slightly more generality. For the reader's convenience, we provide a shorter proof in the appendix.\\

Let $M,\delta>0$, to be decided later and let $\cP$ be the set from \Lemma{porous} applied to our set $E$. Our goal now is to construct a metric space $Y$ containing $X$, then a curve $\Gamma\subseteq Y$ that contains $E$ as a subset, and then show it has finite length. We will do this by adding bridges through $Y$ between net points around cubes in $\cP$, since these are the cubes where $E$ has large holes and thus potentially has big gaps or disconnections. We don't need the endpoints of these bridges to be in $E$, but their union plus the set $E$ will be connected. We now proceed with the details.\\

Let $\tilde{X}=\bigcup X_{n}$ and equip $C(X) \oplus  \bR^{\tilde{X}\times\tilde{X}}$ (where $\bR^{\tilde{X}\times\tilde{X}}=\prod_{\alpha\in \tilde{X}\times\tilde{X}}\bR$, see \cite[p. 112-117]{M} for the notation) with norm $ |a\oplus b|=\max\{|a|,|b|\}$, where the norm on $\bR^{\tilde{X}\times\tilde{X}}$ is the $\ell^{2}$-norm. 

For  $x,y\in \tilde{X}$ let $[x,y]$ denote the straight line segment between them in $C(X) \oplus  \bR^{\tilde{X}\times\tilde{X}}$, $e_{(x,y)}$ is the unit vector corresponding to the $(x,y)$-coordinate in $\bR^{\tilde{X}\times\tilde{X}}$, and define
\begin{multline*}
[x,y]^{*}  := [x,(x, |x-y|e_{(x,y)})]\cup [y,(y, |x-y|e_{(x,y)})] \\
 \cup [(x, |x-y|e_{(x,y)}),(y, |x-y|e_{(x,y)})] \subseteq  C(X) \oplus  \bR^{\tilde{X}\times\tilde{X}}.\end{multline*}

The set $[x,y]^{*}$ is two segments going straight up from $x$ and $y$ respectively in the $e_{(x,y)}$ direction and a segment connecting the endpoints, thus giving a polygonal curve connecting $x$ to $y$ that hops out of $C(X)$.  Let

\[Y=X\cup \bigcup_{x,y\in \tilde{X}}[x,y]^{*}\]
and define a metric on $Y$ (also denoted by $|\cdot |$) by setting 
\[|x-y|=\inf \sum_{i=1}^{N} |x_{i}-x_{i+1}|\]
where $x_{1}=x$, $x_{N+1}=y$, and for each $i$, $\{x_{i},x_{i+1}\}\subseteq X$ or $\{x_{i},x_{i+1}\}\subseteq [x',y']^{*}$ for some $x',y'\in \tilde{X}$. It is easy to check that the resulting metric space $Y$ is separable and $X$ is a sub metric space in $Y$. Moreover, the following lemma is immediate from the definition of $Y$.

\begin{lemma}\label{l:distE}
Let $F\subseteq X$ be compact and $x,y\in \tilde{X}$. Then
\[\dist([x,y]^{*},F)=\dist(\{x,y\},F).\]
\end{lemma}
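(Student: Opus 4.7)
The inequality $\dist([x,y]^*,F)\leq \dist(\{x,y\},F)$ is immediate since $\{x,y\}\subseteq [x,y]^*$, so all the work lies in establishing the reverse inequality. My plan is to exploit the fact that the bridge $[x,y]^*$ uses its own dedicated coordinate direction $e_{(x,y)}\in\bR^{\tilde X\times\tilde X}$, which is orthogonal both to $C(X)$ and to the direction $e_{(x',y')}$ used by any other bridge. Inspecting the explicit parametrisations of the three constituent segments, this yields two geometric facts: first, $[x,y]^*\cap X=\{x,y\}$; second, any interior point of $[x,y]^*$ fails to lie in any bridge $[x',y']^*$ with $(x',y')\neq (x,y)$.

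With these in hand, I would fix $p\in[x,y]^*$ and $f\in F$ and analyse an arbitrary admissible chain $p=x_1,\dots,x_{N+1}=f$. The case $p\in\{x,y\}$ is trivial since then $|p-f|_Y=|p-f|$. Otherwise $x_1=p\notin X$, so the pair $\{x_1,x_2\}$ must lie in a single bridge; by the coordinate separation, that bridge must be $[x,y]^*$, forcing $x_2\in [x,y]^*$. An easy induction shows that $x_i\in[x,y]^*$ for every index $i$ strictly less than the first $i^*$ with $x_{i^*}\in X$, and at that index $x_{i^*}\in[x,y]^*\cap X=\{x,y\}$. Applying the triangle inequality in the ambient Banach space to the tail $x_{i^*},x_{i^*+1},\dots,x_{N+1}=f$ then gives
\[
\sum_{i=1}^{N}|x_i-x_{i+1}|\;\geq\;|x_{i^*}-f|\;\geq\;\min\{|x-f|,|y-f|\}.
\]
Passing to the infimum over chains, then over $p$ and $f$, yields $\dist([x,y]^*,F)\geq \dist(\{x,y\},F)$.

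The one step requiring genuine care is the coordinate-separation claim that an interior point of $[x,y]^*$ can only belong to the single bridge $[x,y]^*$. This is precisely why the construction of $Y$ attaches each bridge to its own independent copy of $\bR$ rather than realising the polygonal curves inside $C(X)$ itself: the mutual orthogonality of the vectors $e_{(x,y)}$ forces distinct bridges to be disjoint away from $X$. Once that fact is checked segment by segment, the rest of the proof reduces to the bookkeeping with the triangle inequality indicated above.
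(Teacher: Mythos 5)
Your proof is correct, and it is essentially the argument the paper has in mind: the paper states the lemma as ``immediate from the definition of $Y$'' and gives no proof, and your chain argument --- distinct bridges meet only inside $X$, so any admissible chain from an interior point of $[x,y]^{*}$ to a point of $F\subseteq X$ must first pass through $x$ or $y$, after which the ambient triangle inequality applies --- is exactly the fleshed-out version of that remark. The only inequality needed beyond this, $\dist([x,y]^{*},F)\leq\dist(\{x,y\},F)$, you correctly note is trivial.
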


We will let 
\[B_{\Delta}:=B_{Y}(\zeta_{\Delta},\ell(\Delta))\supseteq B_{X}(\zeta_{\Delta},\ell(\Delta)).\]

For $\Delta\in \cD_{n}$, let 
\[ \Gamma_{\Delta}=\bigcup \{[x,y]^{*}\subseteq C(X)\oplus \bR^{\tilde{X}\times\tilde{X}}: x,y\in X_{n+n_{0}}\cap MB_{\Delta}\}\]
where $n_{0}$ is an integer we will pick later. Note that $\Gamma_{\Delta}$ is connected and contains $\zeta_{\Delta}$.

Now define
\[\Gamma=E\cup \bigcup_{\Delta\in \cP}\Gamma_{\Delta}.\]

\begin{lemma}\label{l:finite}
$\cH^{1}(\Gamma)<\infty$.
\end{lemma}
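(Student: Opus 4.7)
The plan is to split $\Gamma$ into the two pieces $E$ and $\bigcup_{\Delta\in\cP}\Gamma_\Delta$, bound the $\cH^{1}$-measure of each separately, and then use the Carleson packing estimate from \Lemma{porous} to sum the contributions from $\cP$. Since by construction $\cH^{1}$ is subadditive on Borel sets, it suffices to prove $\cH^{1}(E)<\infty$ and $\sum_{\Delta\in\cP}\cH^{1}(\Gamma_\Delta)<\infty$.

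First I would handle $E$. The hypothesis \eqn{mu>r} says $\mu(B_{X}(x,r))\geq 2r$ for every $x\in E$ and $r\in(0,r_{0})$; this turns the $1$-Hausdorff measure of $E$ into a measure-theoretic quantity. For any $\delta\in(0,r_{0})$, cover $E$ by balls $\{B_{X}(x,\delta):x\in E\}$, extract a disjoint subcollection $\{B_{X}(x_{i},\delta)\}$ via the $5r$-covering lemma so that $\{B_{X}(x_{i},5\delta)\}$ still covers $E$, and estimate
\[\cH^{1}_{5\delta}(E)\leq \sum_{i}10\delta \leq 5\sum_{i}\mu(B_{X}(x_{i},\delta)) \leq 5\mu(B_{X}(\xi_{0},r_{0})),\]
where the last step uses disjointness and that the balls sit inside $B_{X}(\xi_{0},r_{0})$ once $\delta$ is small. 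Letting $\delta\to 0$ gives $\cH^{1}(E)<\infty$.

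Next I would show $\cH^{1}(\Gamma_{\Delta})\lesssim \ell(\Delta)$ with a constant depending only on $M$, $n_{0}$, and $C_{\mu}$. By \Theorem{Christ} the set $X_{n+n_{0}}\cap MB_{\Delta}$ is a $\rho^{n+n_{0}}$-separated subset of a ball of radius $M\ell(\Delta)=5M\rho^{n}$, so its cardinality is bounded by some $N=N(M,n_{0},C_{\mu})$ using doubling. Hence $\Gamma_{\Delta}$ is a union of at most $N^{2}$ polygonal paths $[x,y]^{*}$, each consisting of three straight segments of common length $|x-y|\leq 2M\ell(\Delta)$, giving $\cH^{1}(\Gamma_{\Delta})\leq 6MN^{2}\ell(\Delta)$.

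Finally I would compare $\ell(\Delta)$ to $\mu(\Delta)$ for $\Delta\in\cP$, and then invoke \Lemma{porous}. Since each $\Delta\in\cP$ contains some $x\in E$, the ball $B_{X}(x,\ell(\Delta))\subseteq B_{X}(\zeta_{\Delta},2\ell(\Delta))$ together with \eqn{mu>r} gives $\mu(B_{X}(\zeta_{\Delta},2\ell(\Delta)))\geq 2\ell(\Delta)$. Doubling then forces $\mu(\Delta)\geq \mu(B_{X}(\zeta_{\Delta},c_{0}\ell(\Delta)))\gtrsim \mu(B_{X}(\zeta_{\Delta},2\ell(\Delta)))\gtrsim \ell(\Delta)$ with constants depending only on $c_{0}$ and $C_{\mu}$. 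Summing and using \eqn{sumP} with $\Delta'=\Delta_{0}$ yields
\[\sum_{\Delta\in\cP}\cH^{1}(\Gamma_{\Delta})\lesssim \sum_{\Delta\in\cP}\ell(\Delta)\lesssim \sum_{\Delta\in\cP}\mu(\Delta)\leq C_{1}\mu(\Delta_{0})<\infty,\]
completing the bound. I don't foresee a major obstacle: the only subtle point is choosing the right ambient ball in which to apply doubling when comparing $\ell(\Delta)$ and $\mu(\Delta)$, which is handled by using a point of $E\cap\Delta$ together with \eqn{mu>r}; everything else is bookkeeping.
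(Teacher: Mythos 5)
Your proposal is correct and follows essentially the same route as the paper: bound $\cH^{1}(E)$ by a covering/Vitali argument using \eqn{mu>r}, bound each $\cH^{1}(\Gamma_{\Delta})$ by $C\ell(\Delta)$ via the bounded cardinality of $X_{n+n_{0}}\cap MB_{\Delta}$, convert $\ell(\Delta)\lesssim\mu(\Delta)$ using \eqn{mu>r} and doubling, and sum with the packing estimate \eqn{sumP}. You merely make explicit two steps the paper leaves implicit (the net-point counting and the comparison $\ell(\Delta)\lesssim\mu(\Delta)$), which is fine.
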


\begin{proof}
We first claim that
\begin{equation}\label{e:H(E)<mu(E)}
\cH^{1}(E)\leq 10\mu(E).
\end{equation}
Indeed, let $0<\delta<r_{0}$. Take any countable collection of balls centered on $E$ of radii less than $\delta$ that cover $E$. Since $\mu$ is doubling, we can use the Vitali covering theorem \cite[Theorem 1.2]{H}, to find a countable subcollection of disjoint balls $B_{i}$ with radii $r_{i}<\delta$ centred on $E$ so that $E\subseteq \bigcup 5 B_{i}$. Then
\begin{multline*}
 \cH^{1}_{\delta}(E)
\leq \sum 10r_{i}
\leq 10\sum \mu (B_{i})
\leq 10\mu(\{x\in X:\dist(x,E)<\delta\}).
\end{multline*}
Since $\bigcap_{\delta>0}\{x\in X:\dist(x,E)<\delta\}=E$, sending $\delta\rightarrow 0$ we obtain $\cH^{1}(E)\leq 10\mu(E)$, which proves the claim.

With this estimate in hand, we have
\begin{align*}
\cH^{1}(\Gamma)
& \leq \cH^{1}(E)+\sum_{\Delta\in \cP} \cH^{1}(\Gamma_{\Delta})
\stackrel{\eqn{H(E)<mu(E)}}{\leq } 
10\mu(E)+C\sum_{\Delta\in \cP} \ell(\Delta)\\
& \stackrel{\eqn{mu>r}}{\leq }  10\mu(E)+C\sum_{\Delta\in \cP} \mu(\Delta)
\stackrel{\eqn{sumP}}{\leq} 10\mu(E)+C\mu(\Delta_{0})<\infty
\end{align*}
where $C$ here stands for various constants that depend only on $\delta,M,n_{0},\rho$, and the doubling constant $C_{\mu}$.
\end{proof}

\begin{lemma} \label{l:compact}
$\Gamma$ is compact. 
\end{lemma}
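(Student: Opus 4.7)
The plan is to prove sequential compactness. Take an arbitrary sequence $\{p_k\}\subseteq \Gamma$. If infinitely many $p_k$ lie in the compact set $E$, pass to a convergent subsequence there. Otherwise, after extraction each $p_k\in \Gamma_{\Delta_k}$ for some $\Delta_k\in \cP$, and the analysis depends on the scales $\ell(\Delta_k)$.

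The ``bounded scale'' case $\inf_k\ell(\Delta_k)>0$ is easy. For any fixed $\ve>0$ only finitely many cubes $\Delta\subseteq \Delta_0$ have $\ell(\Delta)\ge \ve$, since by \Theorem{Christ} each such $\Delta$ contains a disjoint ball $B_X(\zeta_\Delta,c_0\ve)$ inside the bounded set $\Delta_0$ and doubling bounds the number of such balls. So only finitely many distinct $\Delta_k$ appear, and pigeonholing puts infinitely many $p_k$ in a single $\Gamma_\Delta$. That set is itself compact: doubling makes $X_{n(\Delta)+n_0}\cap MB_\Delta$ finite, so $\Gamma_\Delta$ is a finite union of piecewise-linear arcs $[x,y]^*$, each compact in $Y$.

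The main case, and the step I expect to be the main obstacle, is $\ell(\Delta_k)\to 0$. Here I would establish the geometric estimate $\dist_Y(p,E)\le (3M+1)\,\ell(\Delta)$ for every $\Delta\in \cP$ and every $p\in \Gamma_\Delta$, via a triangle inequality through a bridge endpoint and $\zeta_\Delta$. Pick $e_\Delta\in \Delta\cap E$ (nonempty since $\Delta\in \cP$), giving $|e_\Delta-\zeta_\Delta|_Y\le \ell(\Delta)$; write $p\in [x,y]^*$ with $x,y\in X_{n(\Delta)+n_0}\cap MB_\Delta$, giving $|x-\zeta_\Delta|_Y\le M\ell(\Delta)$; and bound $|p-x|_Y\le |x-y|\le 2M\ell(\Delta)$ using that $p$ and $x$ lie in the same bridge, so that a single admissible step in the $Y$-metric gives $|p-x|_Y\le |p-x|_{C(X)\oplus \bR^{\tilde X\times \tilde X}}\le |x-y|$. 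Choosing $e_k\in \Delta_k\cap E$ then yields $|p_k-e_k|_Y\to 0$, and compactness of $E$ delivers $e_k\to e\in E\subseteq \Gamma$, to which $p_k$ also converges. The delicate point is verifying the single-step bound, which comes down to inspecting the three segments making up $[x,y]^*$ in the max norm.
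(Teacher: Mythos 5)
Your proof is correct and follows essentially the same route as the paper: sequential compactness via the dichotomy between a sequence trapped in finitely many compact pieces ($E$ or a single $\Gamma_{\Delta}$) and one whose cube scales tend to zero, in which case proximity to the compact set $E$ (your bound $\dist_Y(p,E)\leq (3M+1)\ell(\Delta)$, which the paper leaves implicit) yields a convergent subsequence with limit in $E\subseteq\Gamma$. One small inaccuracy to fix: the inner balls $B_X(\zeta_\Delta,c_0\ve)$ of cubes at \emph{different} scales need not be disjoint (nested cubes), but the finiteness claim survives because only finitely many scales $n$ satisfy $5\rho^{n}\geq\ve$ and, at each fixed scale, the cubes are disjoint so doubling bounds their number inside the bounded set $\Delta_0$.
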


\begin{proof}
To see this, let $x_{n}\in \Gamma$ be any sequence. If $x_{n}\in \Gamma_{\Delta}$ infinitely many times for some $\Delta\in \cP$ or is in $E$ infinitely many times, then since each of these sets are compact, we can find a convergent subsequence with a limit in $\Gamma$. 
Otherwise, $x_{n}$ visits infinitely many $\Gamma_{\Delta}$. Let $x_{n_{j}}$ be a subsequence so that $x_{n_{j}}\in \Gamma_{\Delta_{j}}$ where each $\Delta_{j}\in \cP$ is distinct. Then $\ell(\Delta_{j})\rightarrow 0$, and since $\Delta\cap E\neq\emptyset$ for all $\Delta\in \cP$, $\dist(x_{n_{j}},E)\rightarrow 0$. Pick $x_{n_{j}}'\in E\cap \Delta_{j}$. Since $E$ is compact, there is a subsequence $x_{n_{j_{k}}}'$ converging to a point in $E$, and $x_{n_{j_{k}}}$ will have the same limit. We have thus shown that any sequence in $\Gamma$ has a convergent subsequence, which implies $\Gamma$ is compact.
\end{proof}

\begin{lemma}\label{l:schul}
A compact connected metric space $X$ of finite length can be parametrised by a Lipschitz image of an interval in $\bR$, that is, $X=f([0,1])$ where $f:[0,1]\rightarrow X$ is Lipschitz.
\end{lemma}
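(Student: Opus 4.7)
The plan is to approximate $X$ by a sequence of finite connected graphs whose total edge lengths are controlled by $\mathcal{H}^{1}(X)$, parametrise each by a Lipschitz path with uniformly bounded Lipschitz constant, and then extract a uniform limit using Arzel\`a-Ascoli.

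For each $n \in \mathbb{N}$ I would fix a maximal $2^{-n}$-separated net $V_{n} \subseteq X$, which is finite by compactness, and build a graph $G_{n}$ on $V_{n}$ by placing an edge $\{v,w\}$ of weight $|v-w|$ whenever $|v-w| \leq 4 \cdot 2^{-n}$. Connectedness of $X$ forces $G_{n}$ to be connected, since otherwise the unions of $2^{-n}$-balls about vertices in different components of $G_{n}$ would provide a disconnection of $X$. The key estimate is that a minimum spanning tree $T_{n} \subseteq G_{n}$ has total weight at most $C \mathcal{H}^{1}(X)$ independently of $n$. This uses the classical fact that any continuum $K$ containing two points at distance $d$ satisfies $\mathcal{H}^{1}(K) \geq d$: applying the boundary bumping theorem to subcontinua of $X$ lying in the pairwise disjoint open balls $B_{X}(v, 2^{-n-2})$ bounds $\#V_{n} \leq C \cdot 2^{n}\, \mathcal{H}^{1}(X)$, while each edge of $T_{n}$ has weight at most $4 \cdot 2^{-n}$.

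Next, I would double each edge of $T_{n}$ to obtain an Eulerian multigraph and traverse its Euler circuit at constant speed, producing a Lipschitz map $g_{n}:[0,1] \to X$ whose image contains $V_{n}$ and whose Lipschitz constant is at most twice the total length of $T_{n}$, hence at most $C \mathcal{H}^{1}(X)$. Realising the abstract edges of $T_{n}$ as genuine subpaths of $X$ uses that a continuum of finite $\mathcal{H}^{1}$-measure is arcwise connected, with arcs between nearby points having length comparable to their distance. Applying Arzel\`a-Ascoli in the separable Banach space $C(X)$ from Remark \ref{r:C(X)}, a subsequence $g_{n_{k}}$ converges uniformly to a Lipschitz map $f:[0,1] \to X$. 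Since each $V_{n}$ is contained in $g_{n}([0,1])$ and $\bigcup_{n} V_{n}$ is dense in $X$, the image $f([0,1])$ is dense; being the continuous image of the compact interval $[0,1]$, it is closed, hence equal to $X$.

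The main obstacle is upgrading mere connectedness of the compact rectifiable continuum $X$ to arcwise connectedness with controlled arc lengths between nearby points, which is needed to turn the edges of $T_{n}$ into genuine subpaths inside $X$. This arcwise connectedness is a classical theorem (of Eilenberg-Harrold type), but it requires real work; an alternative route is to first build $g_{n}$ into an enlarged auxiliary metric space assembled from straight-line bridges between net points, analogous to the construction of $Y$ earlier in the paper, and then observe that the uniform limit must land in $X$ because the bridges used at scale $2^{-n}$ stay within $O(2^{-n})$ of $X$.
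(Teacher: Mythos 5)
Your scheme (nets, a spanning tree of controlled total weight via the counting bound $\#V_{n}\lesssim 2^{n}\cH^{1}(X)$, doubled edges and an Euler tour, then Arzel\`a--Ascoli) is essentially the standard proof, and it is in substance the argument of Schul's Corollary 3.7, which is all the paper itself offers: the paper states the lemma, cites \cite{Schul-TSP}, and omits the proof. So the overall plan is fine, but your primary route contains one genuinely false step: the claim that in a continuum of finite $\cH^{1}$-measure, nearby points are joined by arcs of length comparable to their distance. That is not true. A hairpin, or a comb $([0,1]\times\{0\})\cup\bigcup_{k}(\{x_{k}\}\times[0,h_{k}])$ with $\sum h_{k}<\infty$, has pairs of points (tips of adjacent teeth) at extrinsic distance much smaller than the length of any arc in $X$ joining them, and such pairs occur at every scale. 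Arcwise connectedness of finite-length continua is a true (and nontrivial) theorem, but it only gives arcs of finite length, not length $O(2^{-n})$ for edges of $T_{n}$. Consequently, realising the tree edges as subpaths of $X$ can make the total length of the tour blow up, the maps $g_{n}$ are no longer uniformly Lipschitz, and the Arzel\`a--Ascoli step collapses. As written, the main route does not prove the lemma.

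Your fallback route is the correct repair, and it is essentially what the cited proof does: connect the net points of each tree edge by a straight segment in the ambient Banach space $C(X)$ of Remark \ref{r:C(X)} (you do not need the elaborate bridges $[x,y]^{*}$ from the paper's Section 3, since here there is no need to keep the tour from touching $X$), and parametrise the doubled-tree tour at constant speed. Each segment has length at most $4\cdot 2^{-n}$ with endpoints in $X$, so $\sup_{t}\dist(g_{n}(t),X)\leq 2\cdot 2^{-n}\to 0$; together with compactness of $X$ this gives pointwise relative compactness of $\{g_{n}(t)\}_{n}$ in $C(X)$, and the uniform Lipschitz bound $\mathrm{Lip}(g_{n})\leq 2\,\mathrm{weight}(T_{n})\leq C\cH^{1}(X)$ gives equicontinuity, so Arzel\`a--Ascoli applies. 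The limit $f$ is Lipschitz, takes values in the closed set $X$, and its image is compact and dense in $X$ (every $v\in V_{n_{k}}$ is a value of $g_{n_{k}}$), hence equals $X$. Two small further remarks: the vertex count can be obtained more simply than via boundary bumping, by noting that $x\mapsto\dist(x,v)$ is $1$-Lipschitz and maps the connected set $X$ onto a set containing $[0,2^{-n-2})$ once $2^{-n-2}<\diam X$, so $\cH^{1}(X\cap B_{X}(v,2^{-n-2}))\geq 2^{-n-2}$; and the lemma as stated does not require the quantitative bound $\mathrm{Lip}(f)\leq C\cH^{1}(X)$, though your argument delivers it.
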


A proof of this fact for Hilbert spaces is given in \cite[Corollary 3.7]{Schul-TSP}, but the same proof works in our setting, so we omit it. Hence, to show that $\Gamma$ (and hence $E$) is rectifiable, all that remains to show is that $\Gamma$ is connected.

\begin{lemma}\label{l:connected}
The set $\Gamma$ is connected.
\end{lemma}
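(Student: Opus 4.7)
The plan is to argue by contradiction: suppose $\Gamma = A \sqcup B$ is a disconnection, with $A, B$ nonempty disjoint closed (hence compact) subsets of $\Gamma$, and set $\eta := \dist_Y(A, B) > 0$. The first observation is that each $\Gamma_\Delta$ is connected: any point of $\Gamma_\Delta$ lies on some polygonal arc $[x,y]^*$ and is joined to the common vertex $\zeta_\Delta$ through $[x, \zeta_\Delta]^* \subseteq \Gamma_\Delta$ (since $\zeta_\Delta \in X_{n(\Delta)} \subseteq X_{n(\Delta)+n_0} \cap MB_\Delta$). Hence each $\Gamma_\Delta$ lies entirely in $A$ or in $B$. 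Because $\zeta_\Delta \in \Gamma_\Delta \cap X$ for every $\Delta \in \cP$, this forces both $A \cap X$ and $B \cap X$ to be nonempty: if, say, $A \cap X = \emptyset$, then the nonempty $A$ would contain some whole $\Gamma_\Delta$ (by connectedness of $\Gamma_\Delta$), which would put $\zeta_\Delta$ in $A \cap X$, a contradiction.

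Next, I use the topological connectedness of $X$ through the intermediate value theorem. The function $f\colon X \to \bR$ defined by $f(x) = \dist_Y(x, A) - \dist_Y(x, B)$ is $2$-Lipschitz, with $f \leq -\eta$ on $A \cap X$ and $f \geq \eta$ on $B \cap X$. Since $X$ is connected, $f(X)$ is a connected subset of $\bR$, hence an interval containing $0$. Choosing $x_0 \in X$ with $f(x_0) = 0$ gives $d := \dist_Y(x_0, A) = \dist_Y(x_0, B) \geq \eta/2 > 0$, so $x_0 \notin \Gamma$, and in particular $d_0 := \dist_X(x_0, E) \geq d$.

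The heart of the argument is the following cube construction. Let $n_*$ be the unique integer with $\rho^{n_*+1} < d_0 \leq \rho^{n_*}$, let $e_0 \in E$ realize $d_0$, and let $\Delta \in \cD_{n_*}$ be the cube containing $e_0$. A short estimate using $|\zeta_\Delta - e_0| \leq \ell(\Delta)$ yields $|\zeta_\Delta - x_0| \leq \tfrac{6}{5}\ell(\Delta)$, so $x_0 \in MB_\Delta$ for $M \geq 6/5$; moreover the porosity condition is witnessed by $x_0$ itself, since $\dist(x_0, E) = d_0 > \rho^{n_*+1} = \tfrac{\rho}{5}\ell(\Delta) = \delta\ell(\Delta)$ for $\delta \leq \rho/5$, so $\Delta \in \cP$. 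Whichever side $\Gamma_\Delta$ lies on --- say $\Gamma_\Delta \subseteq A$ --- maximality of the net $X_{n_*+n_0}$ produces $z_0 \in X_{n_*+n_0}$ with $|z_0 - x_0| < \rho^{n_*+n_0}$, and a short check shows $z_0 \in MB_\Delta$, so $z_0 \in \Gamma_\Delta \subseteq A$. Consequently
\[
d \leq |z_0 - x_0| < \rho^{n_*+n_0} \leq \rho^{n_0-1} d_0.
\]

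The main obstacle is converting this inequality into a genuine contradiction, and this is where the freedom to pick the parameters $M, \delta, n_0$ at the outset of the construction enters. Considering a nearest $B$-point $b^*$ to $x_0$, the clean case is $b^* \in E \cap B$, in which case $d_0 \leq |b^* - x_0| = d$ combines with the displayed inequality to force $\rho^{n_0-1} > 1$, impossible for $n_0 \geq 1$. When $b^*$ lies inside a bridge $\Gamma_{\Delta'} \subseteq B$, one either replaces $b^*$ by a nearest net-point of $\Gamma_{\Delta'}$ in $X$ (which is still within distance $(M+1)\ell(\Delta')$ of $E$) and reruns the argument, or mirrors the cube construction centered at $b^*$ to produce a net-point of $\Gamma_{\Delta'} \subseteq B$ within distance $\rho^{n_0-1}\ell(\Delta')$ of an $E$-point inside $\Delta'$; choosing $n_0$ sufficiently large relative to the other construction constants makes the resulting inequality violate $\dist(A, B) = \eta$. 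This finishes the proof that $\Gamma$ is connected, and combined with Lemmas \ref{l:finite} and \ref{l:schul} completes the proof of Lemma \ref{l:reduction}.
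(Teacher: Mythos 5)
Your first half (reduction to nonempty $A\cap X$, $B\cap X$ and the intermediate--value argument applied to $f=\dist(\cdot,A)-\dist(\cdot,B)$) is a genuinely different and clean way to extract a ``separating'' point of $X$ from connectedness, but as written the argument has a real gap at the very point where the paper works hardest: nothing controls \emph{where} the zero $x_0$ of $f$ lies. You choose an arbitrary $x_0$ with $f(x_0)=0$; if every such zero is far from $E$ (and in a general connected metric space the zero set of $f$ can stay far from $\Gamma$ --- think of two nearby pieces of $X$ joined only through a long detour), then $d_0=\dist(x_0,E)$ can be much larger than $\ell(\Delta_0)$, the cube $\Delta\in\cD_{n_*}$ containing $e_0$ is \emph{not} a subcube of $\Delta_0$, and hence $\Delta\notin\cP$ no matter how $M,\delta$ are chosen (membership in $\cP$ requires $\Delta\subseteq\Delta_0$). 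Then $\Gamma_\Delta$ is not part of $\Gamma$, the point $z_0$ need not lie in $A\cup B$, and the key inequality $d\le|z_0-x_0|$ has no justification, so no contradiction is reached in that configuration. This is exactly the issue the paper's steps (a)--(d) are engineered to avoid: by adjoining $(\cnj{4B_{\Delta_0}})^{c}$ to one of the open sets, the separating point $z$ is forced into $\cnj{4B_{\Delta_0}}$, so the cube it produces is automatically a subcube of $\Delta_0$ lying in $\cP$. Your proof needs an analogous localization (e.g.\ showing a zero of $f$ exists within $C\ell(\Delta_0)$ of $E$, which the bare connectedness of $X$ does not give), and currently it has none.

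The second gap is the endgame. When the nearest point $b^*$ of $B$ to $x_0$ is not in $E$, your two proposed fixes are only gestures, and the announced contradiction (``violate $\dist(A,B)=\eta$'') is not the right target: from $d\ge\eta/2$ and $d<\rho^{n_0-1}d_0$ one cannot contradict $\eta$ because $d_0$ has no upper bound in terms of $\eta$. What the situation actually forces (note, via Lemma \ref{l:distE}, that $b^*$ may be taken to be a bridge endpoint, hence a net point $b^*\in X_{n(\Delta')+n_0}\cap MB_{\Delta'}$ for some $\Delta'\in\cP$ with $\Gamma_{\Delta'}\subseteq B$) is a \emph{scale comparison}: since $\Delta'\cap E\ne\emptyset$, one gets $d_0\le d+(M+1)\ell(\Delta')$, hence $\ell(\Delta')\gtrsim d_0\gtrsim\rho\,\ell(\Delta)$, and then, after fixing $n_0$ large and $M$ appropriately, either $b^*$ itself or $\zeta_{\Delta'}$ lies in $X_{n(\Delta)+n_0}\cap MB_{\Delta}\subseteq\Gamma_\Delta\subseteq A$ while also lying in $\Gamma_{\Delta'}\subseteq B$ --- a contradiction of the type the paper obtains in its steps (g)--(h). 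This comparison of $n(\Delta)$ with $n(\Delta')$ and the verification that the two bridge sets share a point is the technical core of the lemma, and leaving it at ``rerun the argument / mirror the construction, choosing $n_0$ large'' is not a proof. So: attractive alternative framework, but both the localization of $x_0$ and the two-cube collision argument must be supplied before it stands.
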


\begin{proof}
Suppose for the sake of a contradiction that there exist two open and disjoint sets $A$ and $B$ that cover $\Gamma$ and set $\Gamma_{A}=\Gamma\cap A$ and $\Gamma_{B}=\Gamma\cap B$. Suppose without loss of generality that $\Gamma_{\Delta_{0}}\subseteq \Gamma_{A}$, which we may do since $\Gamma_{\Delta_{0}}$ is connected. We sort the proof into a series of steps.

\begin{enumerate}[(a)]

\item $\Gamma_{B}\subseteq 2B_{\Delta_{0}}$. To see this, suppose instead that there is $z\in \Gamma_{B}\backslash 2B_{\Delta_{0}}$. Then $z\in [x,y]^{*}\subseteq \Gamma_{\Delta}$ for some $\Delta\in \cP$. Moreover, $\dist(z,\{x,y\})\leq 2|x-y|\leq 4M\ell(\Delta)$ since $x,y\in MB_{\Delta}$. Since $\zeta_{\Delta}\in \Delta\subseteq \Delta_{0}$ and $x\in MB_{\Delta}$, we get 
\begin{align*}
 \ell(\Delta_{0}) 
 & \leq \dist(z,B_{\Delta_{0}})\leq |z-x|+\dist(x,B_{\Delta_{0}})\leq 4M\ell(\Delta)+M\ell(\Delta)\\
 & =5M\ell(\Delta).\end{align*}

For $n_{0}$ large enough so that $5M\rho^{n_{0}}<1$, this implies $\zeta_{\Delta}\in X_{n+n_{0}}\cap MB_{\Delta_{0}}$ and so $\Gamma_{\Delta}\cap \Gamma_{\Delta_{0}}\neq\emptyset$. Hence, $\Gamma_{\Delta}\subseteq \Gamma_{A}$ since $\Gamma_{\Delta}$ is connected, contradicting that $z\in \Gamma_{B}$. This proves the claim.

\item The open sets $A'=A\cup (\cnj{4B_{\Delta_{0}}})^{c}$ and $B'=B\cap 2B_{\Delta_{0}}$ are disjoint and cover $\Gamma$. First, observe that
\begin{align*}
A'\cap B' & =(A\cap B\cap 2B_{\Delta_{0}})\cup ((\cnj{4B_{\Delta_{0}}})^{c}\cap B\cap 2B_{\Delta_{0}})
\\
& \subseteq (A\cap B)\cup ( (\cnj{4B_{\Delta_{0}}})^{c}\cap 2B_{\Delta_{0}})=\emptyset.
\end{align*}

Moreover, by part (a),
\[
\Gamma\cap (A'\cup B')
\supseteq \Gamma_{A}\cup (\Gamma_{B}\cap 2B_{\Delta_{0}})
=\Gamma_{A}\cup \Gamma_{B}=\Gamma\]
which completes the proof of this step. 

\item Set $\Gamma_{A'}=\Gamma\cap A'$ and $\Gamma_{B'}=\Gamma\cap B'$. These sets are disjoint by part (b) and hence they are compact since $\Gamma$ was compact. We define new open sets
\[A''=(\cnj{4B_{\Delta_{0}}})^{c} \cup \bigcup_{\xi\in \Gamma_{A'}} B_{Y}(\xi,\dist(\xi,\Gamma_{B'})/2)\]
and 
\[ B''=\bigcup_{\xi\in \Gamma_{B'}} B_{Y}(\xi,\dist(\xi,\Gamma_{A'})/2).\]

We claim these sets are disjoint. Suppose there is $z\in A''\cap B''$. Then $z\in B_{Y}(\xi,\dist(\xi,\Gamma_{A'})/2)$ for some $\xi\in \Gamma_{B'}$. If we also have $z\in B_{Y}(\xi',\dist(\xi',\Gamma_{B'})/2)$ for some $\xi'\in \Gamma_{A'}$, then
\begin{align*}
 \max\{\dist(\xi,\Gamma_{B'}),\dist(\xi',\Gamma_{A'})\}
 & \leq  |\xi-\xi'|
 \leq |\xi-z|+|z-\xi| \\
  & <\frac{\dist(\xi,\Gamma_{B'})}{2}+\frac{\dist(\xi',\Gamma_{A'})}{2},
  \end{align*}
which is a contradiction, so we must have $z\in (\cnj{4B_{\Delta_{0}}})^{c}$. Since $\xi\in \Gamma_{B'}$, we know $\xi\in 2B_{\Delta_{0}}$ by part (a), and $\zeta_{\Delta_{0}}\in \Gamma_{\Delta_{0}}\subseteq \Gamma_{A'}$ implies $\dist(\xi,\Gamma_{A'})\leq 2\ell(\Delta_{0})$. Hence, 
\[ B_{Y}(\xi,\dist(\xi,\Gamma_{A'})/2)\subseteq B_{Y}(\xi,\ell(\Delta_{0}))\subseteq B_{Y}(\zeta_{\Delta_{0}},3\ell(\Delta_{0}))=3B_{\Delta_{0}},\]
which proves the claim. 

\item Note that $X\backslash (A''\cup B'')$ is nonempty since $X$ is connected and $A''$ and $B''$ are disjoint open sets. Moreover, $X\backslash (A''\cup B'')\subseteq \cnj{4B_{\Delta_{0}}}$ and hence a bounded set; since $X$ is a doubling metric space, $X\backslash (A''\cup B'')$  is in fact totally bounded and thus compact by the Heine-Borel theorem. This implies we can find a point 
\[z\in X\backslash (A''\cup B'') \subseteq \cnj{4B_{\Delta_{0}}}\] 
of maximal distance from the compact set $\Gamma$.

\item Let $\xi\in E$ be the closest point to $z$ and $\Delta$ the smallest cube containing $\xi$ so that $z\in 5B_{\Delta}$; since $z\in \cnj{4B_{\Delta_{0}}}\subseteq 5B_{\Delta_{0}}$, this is well defined. We claim $\Delta\in \cP$. If $\Delta_{1}$ denotes the child of $\Delta$ that contains $\xi$, then $z\not\in 5B_{\Delta_{1}}$, and so
\begin{align}\label{e:distzE}
\dist(z,E) & =|\xi-z|\geq |z-\zeta_{\Delta_{1}}|-|\zeta_{\Delta_{1}}-\xi|
\geq 5\ell(\Delta_{1})-\ell(\Delta_{1})  \notag \\
& =4\rho\ell(\Delta).\end{align}
Thus, for $M>10$, $B_{X}(z,4\rho\ell(\Delta))\subseteq MB_{\Delta}\backslash E$, so if $\delta<4\rho$, then $\Delta\in \cP$, which proves the claim. 

\item Since $\Delta\in \cP$, $X_{n(\Delta)+n_{0}}$ is a maximal $\rho^{n(\Delta)+n_{0}}$-net, \[\rho^{n(\Delta)+n_{0}}<\rho^{n_{0}}\ell(\Delta)<\ell(\Delta),\] 
and $z\in 5B_{\Delta}$, we can find
\begin{align}
\zeta & \in X_{n(\Delta)+n_{0}}\cap B_{X}(z,\rho^{n(\Delta)+n_{0}}) \label{e:zeta1} \\
& \subseteq X_{n(\Delta)+n_{0}}\cap B_{X}(\zeta_{\Delta},5\ell(\Delta)+\rho^{n(\Delta)+n_{0}}) \notag \\
& \subseteq X_{n(\Delta)+n_{0}} \cap B_{X}(\zeta_{\Delta},6\ell(\Delta))
 \subseteq \Gamma_{\Delta}
 \label{e:zeta}
\end{align}
where the last containment follows if we assume $M>6$.

Since $\Gamma_{\Delta}$ is connected and $A'$ and $B'$ are disjoint open sets, we may without loss of generality suppose $\Gamma_{A'}\supseteq \Gamma_{\Delta}$ and let $\zeta'\in \Gamma_{B'}$ be the closest point to $\zeta$. Then 
\begin{equation}\label{e:z-zeta>zeta-zeta}
|z-\zeta|\geq |\zeta-\zeta'|/2=\dist(\zeta,\Gamma_{B'})/2
\end{equation} 
since otherwise would imply $z\in B_{Y}(\zeta,\dist(\zeta,\Gamma_{B'})/2)\subseteq A''$, contradicting that $z\in X\backslash (A''\cup B'')$. 

We may assume $\zeta'\in \Gamma_{\Delta'}$ for some $\Delta'\in \cP$ and we assume $\Delta'$ is the largest such cube for which this happens. Note that this implies $\Gamma_{\Delta'}\subseteq \Gamma_{B'}$ since $\zeta'\in \Gamma_{B'}\cap \Gamma_{\Delta'}$ and $\Gamma_{\Delta'}$ is connected. By \Lemma{distE} with $F=\{\zeta\}$, we can assume $\zeta'\in X$, and so $\zeta'\in X_{n(\Delta')+n_{0}}\cap MB_{\Delta'}$.

\item We claim that $n(\Delta)+1\leq n(\Delta')\leq n(\Delta)+2$. Note that since 
\begin{equation}\label{e:fml}
5\rho^{n(\Delta)+n_{0}}\leq \ell(\Delta)\rho^{n_{0}}\leq \rho\ell(\Delta)<\ell(\Delta), 
\end{equation}
we have
\begin{align}\label{e:zeta'-xi}
|\zeta'-\zeta_{\Delta}|
& \leq |\zeta'-\zeta|+|\zeta-\zeta_{\Delta} | 
 \stackrel{\eqn{zeta}\atop \eqn{z-zeta>zeta-zeta} }{<} 2|\zeta-z|+6\ell(\Delta)\notag \\
& \stackrel{\eqn{zeta1}}{<} 2\rho^{n(\Delta)+n_{0}}+6\ell(\Delta)
\stackrel{\eqn{fml}}{\leq} 8\ell(\Delta).
\end{align}

Thus, for $M>8$, we must have $n(\Delta')> n(\Delta)$; otherwise, since $\xi\in \Delta\subseteq B_{\Delta}$, we would have
\[
\zeta'\in X_{n(\Delta')+n_{0}}\cap 8B_{\Delta}
\subseteq X_{n(\Delta)+n_{0}}\cap MB_{\Delta}\subseteq \Gamma_{\Delta}\]
so that $\Gamma_{\Delta}\cap \Gamma_{\Delta'}\neq\emptyset$, which implies $\Gamma_{A'}\cap \Gamma_{B'}\neq\emptyset$, a contradiction. Thus, $\ell(\Delta')< \ell(\Delta)$, which proves the first inequality in the claim.  

Note this implies $\ell(\Delta')\leq \rho \ell(\Delta)$. Let $\xi'\in \Delta'\cap E$ (which exists since $\Delta'\in \cP$). Since $\zeta'\in MB_{\Delta'}$ we have
\begin{align*}
4\rho\ell(\Delta) 
& \stackrel{\eqn{distzE}}{\leq} \dist(z,E) 
 \leq |\xi'-z|\\
& \leq |\xi'-\zeta_{\Delta'}|+|\zeta_{\Delta'}-\zeta'|+|\zeta'-\zeta|+|\zeta-z|\\
& \stackrel{\eqn{z-zeta>zeta-zeta} }{\leq} \ell(\Delta')+M\ell(\Delta')+2|\zeta-z|+|\zeta-z|\\
& \stackrel{\eqn{zeta}}{\leq} (M+1)\ell(\Delta')+3\rho^{n(\Delta)+n_{0}}\\
& \stackrel{\eqn{fml}}{\leq} (M+1)\ell(\Delta')+\rho\ell(\Delta)
\end{align*}
and so
\[
\frac{3\rho}{M+1} \ell(\Delta)\leq \ell(\Delta').\]
Thus, $\rho<\frac{3}{M+1}$ implies $\rho^{2}\ell(\Delta)\leq \ell(\Delta')$, and so $n(\Delta')\leq n(\Delta)+2$, which finishes the claim.

\item Now we'll show that $\Gamma_{\Delta}\cap \Gamma_{\Delta'}\neq\emptyset$. Observe that

\begin{align}
|\zeta_{\Delta}-\zeta_{\Delta'}|
& \leq |\zeta_{\Delta}-\zeta'|+|\zeta'-\zeta_{\Delta'}|
 \stackrel{\eqn{zeta'-xi}}{\leq} 8\ell(\Delta)+M\ell(\Delta') \notag \\
& \leq (8+M\rho)\ell(\Delta)< M\ell(\Delta)
\label{e:M+9}
\end{align}
if $\rho^{-1}>M>9$. Since $n(\Delta')\leq n(\Delta)+2$, we have that $\zeta_{\Delta'}\in X_{n(\Delta)+n_{0}}\cap MB_{\Delta}$ for $n_{0}\geq 2$ and so $\zeta_{\Delta'}\in \Gamma_{\Delta}$. But $\zeta_{\Delta'}\in X_{n(\Delta')+n_{0}}\cap MB_{\Delta'}\subseteq \Gamma_{\Delta'}$, thus  $\Gamma_{\Delta}\cap \Gamma_{\Delta'}\neq\emptyset$, which proves the claim. 

This gives us a grand contradiction since $\Gamma_{\Delta}\subseteq \Gamma_{A'}$ and $\Gamma_{\Delta'}\subseteq \Gamma_{B'}$, and we assumed these sets to be disjoint.

\end{enumerate}
\end{proof}

Combining Lemmas \ref{l:finite}, \ref{l:compact}, \ref{l:schul}, and \ref{l:connected}, we have now shown that $E$ is contained in the Lipschitz image of an interval in $\bR$. This completes the proof of \Lemma{reduction}.
\end{proof}

\section{Appendix: Proof of \Lemma{porous}}

For $\Delta\in \cD$,  define $B_{\Delta}=B_{X}(\zeta_{\Delta},\ell(\Delta))$. For $\Delta\in \cP$, let $\xi_{\Delta}\in MB_{\Delta}$ be such that $\dist(\xi,E)\geq \delta\ell(\Delta)$. Let $\cM$ be the collection of maximal cubes for which $2B_{\Delta}\subseteq E^{c}$ and $\tilde{\Delta}\in \cM$ be the largest cube containing $\xi_{\Delta}$. Then if $\tilde{\Delta}^{1}$ denotes the parent cube of $\tilde{\Delta}$, $2B_{\tilde{\Delta}^{1}}\cap E\neq\emptyset$, and so 
\begin{equation}
\delta \ell(\Delta)\leq \dist(\xi_{\Delta},E)\leq \diam 2B_{\tilde{\Delta}^{1}} \leq 4\ell(\tilde{\Delta}^{1})=\frac{4}{\rho}\ell(\tilde{\Delta})
\label{e:tildedelta}
\end{equation}
Moreover, 
\begin{equation}
\ell(\tilde{\Delta})\leq \frac{2M}{c_{0}}\ell(\Delta)
\label{e:tildedelta2}
\end{equation}
for otherwise $\tilde{\Delta}\supseteq c_{0} B_{\tilde{\Delta}}\supseteq MB_{\Delta}\supseteq \Delta$ and since $\Delta\cap E\neq\emptyset$,  this means $2B_{\tilde{\Delta}}\cap E\neq\emptyset$, contradicting our definition of $\tilde{\Delta}$.


Let $N_{\Delta}$ be such that 
\begin{equation}
\label{e:2^N}
2^{N_{\Delta}}c_{0}\ell(\tilde{\Delta})>2M\ell(\Delta)>2^{N_{\Delta}-1}c_{0}\ell(\tilde{\Delta}).
\end{equation}
Then $2^{N_{\Delta}}c_{0}B_{\tilde{\Delta}}\supseteq MB_{\Delta}$, and $2^{N_{\Delta}}<\frac{4M\ell(\Delta)}{c_{0}\ell(\tilde{\Delta})}$, so that 
\begin{equation}
N_{\Delta}<\log_{2}\ps{\frac{4M\ell(\Delta)}{c_{0}\ell(\tilde{\Delta})}}.
\label{e:ndelta}
\end{equation}
Thus
\begin{align}
\frac{\mu(\tilde{\Delta})}{\mu(\Delta)}
& \geq \frac{\mu(c_{0}B_{\tilde{\Delta}})}{\mu(\Delta)}
\stackrel{\eqn{doubling}}{\geq} \frac{\mu(2^{N_{\Delta}}c_{0}B_{\tilde{\Delta}}) }{C_{\mu}^{N_{\Delta}} \mu(\Delta)}
\stackrel{\eqn{2^N}}{ \geq} \frac{\mu(MB_{\Delta})}{C_{\mu}^{N_{\Delta}} \mu(\Delta)}  \notag \\
&   \stackrel{\eqn{ndelta}}{\geq} C_{\mu}^{\log_{2}\frac{c_{0}}{4M}}\ps{\frac{\ell(\tilde{\Delta})}{\ell(\Delta)}}^{\log_{2}C_{\mu}}
 \stackrel{\eqn{tildedelta}}{\geq } 
C_{\mu}^{\log_{2}\frac{c_{0}}{4M}}\ps{\frac{4}{\rho}}^{\log_{2} C_{\mu}}=: a
\label{e:a}
\end{align}

Since $\mu$ is doubling and $\Delta$ and $\Delta'$ are always of comparable sizes by \eqn{tildedelta} and \eqn{tildedelta2}, there is $b$ depending on $M,\delta,\rho,c_{0}$ and $C_{\mu}$ such that at most $b$ many cubes $\Delta\in \cM$ with $\tilde{\Delta}=\Delta'$ for some fixed $\Delta'$. Hence, for $\Delta'\subseteq \Delta_{0}$ with $\Delta\cap E\neq\emptyset$, 
\begin{align*}
\sum_{\Delta\subseteq \Delta' \atop \Delta\in \cP}\mu(\Delta)
& \stackrel{\eqn{a}}{\leq} \sum_{\Delta\subseteq \Delta' \atop \Delta\in \cP} a\mu(\tilde{\Delta})
=\sum_{\Delta'\in \cM\atop \Delta\subseteq MB_{\Delta_{0}}}\sum_{{\Delta\subseteq \Delta' \atop \Delta\in \cP}\atop \tilde{\Delta}=\Delta'} a\mu(\tilde{\Delta})
\leq \sum_{\Delta'\in \cM \atop \Delta\subseteq MB_{\Delta_{0}}} ab\mu(\Delta')\\
& \leq ab\mu(MB_{\Delta_{0}}\backslash E)
\leq ab\mu(MB_{\Delta_{0}})
\stackrel{\eqn{doubling}}{\leq} abC_{\mu}^{\log_{2} \frac{M}{c_{0}}+1}\mu(c_{0}B_{\Delta_{0}})\\
& \leq abC_{\mu}^{\log_{2} \frac{M}{c_{0}}+1}\mu(\Delta_{0})
\end{align*}

This finishes the proof of \Lemma{porous}.

\def\cprime{$'$}
\providecommand{\bysame}{\leavevmode\hbox to3em{\hrulefill}\thinspace}
\providecommand{\MR}{\relax\ifhmode\unskip\space\fi MR }
\providecommand{\MRhref}[2]{%
  \href{http://www.ams.org/mathscinet-getitem?mr=#1}{#2}
}
\providecommand{\href}[2]{#2}


\begin{thebibliography}{9999}


\bibitem[Azz]{AzzHarm}
J.~Azzam, \emph{Sets of absolute continuity for harmonic measure in {NTA}
  domains}, arXiv preprint (2014).

\bibitem[ADT]{ADT}
J.~Azzam, G.~David, T.~Toro \emph{Wasserstein Distance and the Rectifiability of Doubling Measures: Part I}, to appear in Mathematische Annalen.

\bibitem[BS1]{BS1}
M.~Badger and R.~Schul, \emph{Multiscale analysis of 1-rectifiable measures: necessary conditions}, arXiv preprint (2014).

\bibitem[BS2]{BS2} M.\ Badger and R.\ Schul, {\em Two sufficient conditions for rectifiable measures,} preprint. 


\bibitem[Chr]{Christ-T(b)}
M.~Christ, \emph{A {$T(b)$} theorem with remarks on analytic capacity and the
  {C}auchy integral}, Colloq. Math. \textbf{60/61} (1990), no.~2, 601--628.
  \MR{1096400 (92k:42020)}


\bibitem[Dav]{David88}
G.~David, \emph{Morceaux de graphes lipschitziens et int\'egrales singuli\`eres
  sur une surface}, Rev. Mat. Iberoamericana \textbf{4} (1988), no.~1, 73--114.
  \MR{1009120 (90h:42026)}

\bibitem[Fed]{Federer}
H.~Federer, \emph{Geometric measure theory}, Die Grundlehren der mathematischen
  Wissenschaften, Band 153, Springer-Verlag New York Inc., New York, 1969.
  \MR{0257325 (41 \#1976)}

\bibitem[GKS]{GKS}
J.~Garnett, R.~Killip, and R.~Schul, \emph{A doubling measure on {$\Bbb R^d$}
  can charge a rectifiable curve}, Proc. Amer. Math. Soc. \textbf{138} (2010),
  no.~5, 1673--1679. \MR{2587452}
  
\bibitem[Hei]{H}
J.~Heinonen, \emph{Lectures on analysis on metric spaces}, Universitext,
  Springer-Verlag, New York, 2001. \MR{1800917 (2002c:30028)}


\bibitem[HM]{HM11}
T.~Hyt{\"o}nen and H.~Martikainen, \emph{Non-homogeneous {$Tb$} theorem and
  random dyadic cubes on metric measure spaces}, J. Geom. Anal. \textbf{22}
  (2012), no.~4, 1071--1107. \MR{2965363}
  
  

\bibitem[Mat]{Mattila}
P.~Mattila, \emph{Geometry of sets and measures in {E}uclidean spaces},
  Cambridge Studies in Advanced Mathematics, vol.~44, Cambridge University
  Press, Cambridge, 1995, Fractals and rectifiability. \MR{1333890 (96h:28006)}
  
  \bibitem[Mun]{M}
  J.M.~Munkres, \emph{Topology: a first course}, Prentice-Hall, Inc., Englewood Cliffs, N.J., 1975. 
 \MR{0464128}
  

\bibitem[Sch]{Schul-TSP}
R.~Schul, \emph{Subsets of rectifiable curves in {H}ilbert space---the analyst's
  {TSP}}, J. Anal. Math. \textbf{103} (2007), 331--375. \MR{2373273
  (2008m:49205)}

\end{thebibliography}
\end{document}